\providecommand{\U}[1]{\protect\rule{.1in}{.1in}}
\newtheorem{theorem}{Theorem}
\newtheorem{corollary}{Corollary}
\newtheorem{lemma}{Lemma}
\newtheorem{remark}{Remark}
\newenvironment{proof}[1][Proof]{\noindent\textbf{#1.} }{\ \rule{0.5em}{0.5em}}
\begin{document}

\title{On $C$-Pareto dominance in decomposably $C$-antichain-convex sets}
\author{Maria Carmela Ceparano\\University of Naples Federico II\\email: mariacarmela.ceparano@unina.it
\and Federico Quartieri\\University of Florence\\email: federico.quartieri@unifi.it}
\maketitle

\begin{abstract}
\noindent This paper shows that---under suitable conditions on a cone
$C$---any element in the convex hull of a decomposably $C$-antichain-convex
set $Y$ is $C$-Pareto dominated by some element of $Y$. Building on this, the
paper proves the disjointness of the convex hulls of two disjoint decomposably
$C$-antichain-convex sets whenever one of latter is $C$-upward. These findings
are used to obtain several consequences on the structure of the $C$-Pareto
optima of decomposably $C$-antichain-convex sets, on the separation of
decomposably $C$-antichain-convex sets and on the convexity of the set of
maximals of $C$-antichain-convex relations and of the set of maximizers of
$C$-antichain-quasiconcave functions. Special emphasis is placed on the
invariance of the solution set of a problem after its \textquotedblleft
convexification\textquotedblright.

\

\noindent\textbf{Keywords:} Antichain-convexity; Convexification; Pareto
optimality; Maximal elements; Separation

\

\noindent\textbf{AMS Classification:} 52A01; 54F05; 58E17.

\end{abstract}

\section{Introduction}

Generalized convexity plays an important role in optimization theory as well
as in its applications to mathematical economics. In this paper we deal with
two mathematical issues that are of interest to mathematical economics: the
separation of possibly non-convex sets and the convexity of the set of
maximals (resp. maximizers) of a possibly non-convex relation (resp. possibly
non-quasiconcave function).\ The first type of results can be used, for
instance, in welfare economics in order to obtain variants of the classical
\textquotedblleft Second Welfare Theorem\textquotedblright: within the
mathematical strand of literature motivated by the influential paper
\cite{Gues75}, we mention
\cite{Khan87,Khan88,Boni88,Khan99,Mord00,Boni02,Flam06,Flor06,Jofr06,Habt11,Cepa19}%
.\ The second type of results can be used, for instance, in demand theory
where the convexity of the solutions of a consumer's constrained maximization
problem is a property with useful consequences for the theory of general
equilibrium: see, e.g., \cite[Chapter 18]{Bord85}---as well as the literature
cited therein---for illustrations of the use of convex-valued (excess) demand
correspondences in the proofs of the existence of an equilibrium price. Even
if through this work we shall explain the economic motivation for the
structure of some specific optimization problems considered, the focus of the
paper is only on the mathematical aspects of such problems.

The key-notion employed in our analysis combines convexity notions with
order-theoretic ones. Any cone $C$ in a real vector space $V$ generates a
binary relation $R_{C}$ on $V$ defined by
\[
(x,y)\in R_{C}\text{ if and only if }y-x\in C\text{.}%
\]
The notion of $C$-antichain-convexity stipulates the usual definition of
convexity only for pairs of vectors that are unrelated through $R_{C}$. Such a
notion has been introduced\footnote{Under a much more particular form,
however, a variant of that definition had already appeared in \cite{Cepa17} to
prove a fixpoint theorem of Krasnose\v{l}skii type.} in \cite{Cepa19}, where
it is shown that the Minkowski sum of two $C$-antichain-convex sets is convex
if one of the two summands is $C$-upward (a set is $C$-upward whenever its
Minkowski sum with the cone $C$ is included in the set itself). This
mathematical fact---which properly generalizes the assertion that the sum of
two convex sets is convex---is used in that article to prove a separation
theorem for possibly non-convex sets.

In the present work we show two other useful properties of decomposably
$C$-antichain-convex sets (i.e., of sets that can be expressed as the
Minkowski sum of $C$-antichain-convex sets). The first---see Lemma \ref{HM}
and its generalization in Theorem \ref{MS}---is the existence, under suitable
conditions on a cone $C$ and for any element $x$ in the convex hull of a
decomposably $C$-antichain-convex set $S$, of a $C$-Pareto dominating element
$y$ in $S$ (i.e., of an element $y$ in $S$ whose difference $y-x$ with $x$ is
a vector in the cone $C$). The second---see Theorem \ref{TEOCON}---is the
disjointness of the convex hulls of two disjoint decomposably $C$%
-antichain-convex sets whenever one of latter is $C$-upward.

The first property on the existence of a $C$-Pareto dominating element has a
crucial role in proving that---in some constrained optimization problems---the
set of maximals of certain $C$-antichain-convex relations coincides with that
of their convexifications. Also, that property has an important role in the
proof that,\ under suitable condition on the cone $C$, the set of $C$-Pareto
optima of a decomposably $C$-antichain-convex set equals that of its convex
hull (this can have implications also for results on the existence of Pareto
optima like, e.g., those in \cite{Jofr15}). From the influential article
\cite{Star69}, that made use of the Shapley-Folkman theorem, many non-convex
optimization problems in mathematical economics have been tackled by
considering their \textquotedblleft convexification\textquotedblright. A
similar approach---partly motivated by the mentioned problems in mathematical
economics---has been taken in pure optimization theory: see, in particular,
\cite{Ekel99} for a discussion. The essential difference between our and those
results is that the convexity of optimal solutions is here directly obtained
for the problem under consideration, rather than as a limit or an approximated solution.

The second property has several consequences and can be suitably---though not
necessarily directly---used to extend some results of the literature about the
separation of disjoint convex sets. This paper shows that such extensions are
possible not only in the case of classical separation theorems which involve
nonempty topological interiors---or closed and compact sets---that can be
found in standard textbooks (like, e.g., \cite{Kell63}) but also in the case
of more recent results which dispense with such assumptions (like, e.g., those
in \cite{Vanc19} which use the quasi-relative interiority notion introduced by
\cite{Borw92}).

The paper is organized as follows. Sect. \ref{PRE} recalls some definitions
and shows some facts. Sect. \ref{SEC} proves the main result about the
existence of $C$-Pareto dominating elements and shows a useful consequence on
the structure of the $C$-Pareto optima of decomposably $C$-antichain-convex
sets. Sect. \ref{TRE} elaborates on the main result showing sufficient
conditions for the convex hulls of disjoint decomposably $C$-antichain-convex
sets to be disjoint. Sect. \ref{SEP} applies the previous results to obtain
various separation theorems for $C$-antichain-convex sets and Sect. \ref{MAX}
investigates on the convexity of the set of maximals (resp. maximizers) of
$C$-antichain-convex relations (resp. $C$-antichain-quasiconcave functions).

\section{\label{PRE}Preliminary definitions, notation and facts}

In this Sect. \ref{PRE} we fix the general definitions and notation used
through the entire paper and we point out some general facts. More specific
definitions and notation will be introduced and recalled at the beginning of
the sections where they will be used.

\paragraph*{Relations\ \ \ \ \ }

\noindent Let $S$ be a set. A \textbf{relation} $R$ on $S$ is a subset of
$S\times S$: when $(t,s)\in R$\ we say that $t$ is $R$\textbf{-related with
}$s$. Given a relation $R$ on $S$, the set $R(s)$ defined by
\[
R(s)=\{t\in S:(t,s)\in R\}
\]
will henceforth denote the \textbf{set of all elements of }$S$\textbf{\ that
are }$R$\textbf{-related with }$s$. A relation $R$ on $S$ is: \textbf{total}
iff for all $(s,t)\in S\times S$ we have that either $t\in R(s)$ or $s\in
R(t)$; \textbf{transitive} iff for all $(r,s,t)\in S\times S\times S$ such
that $r\in R(s)$ and $s\in R(t)$ we have $r\in R(t)$; a \textbf{total
preorder} iff $R$ is total and transitive.

\paragraph*{Maximals\newline}

\noindent Let $R$ be a relation on a set $X$ and $S$ be a subset of $X$. An
element $m\in S$ is $R$\textbf{-maximal on }$S$ iff%
\[
s\in S\text{ and }s\in R(m)\Rightarrow m\in R(s)\text{.}%
\]
The\textbf{\ set of all }$R$\textbf{-maximals on }$S$ is denoted by
$\mathcal{M}(R,S)$.

\begin{lemma}
\label{BASE}Let $R$ be\ a total relation on a set $X$. Assume that $S\subseteq
X$. Then $m\in\mathcal{M}(R,S)$ if and only if $m\in R(s)$ for all $s\in S$.
\end{lemma}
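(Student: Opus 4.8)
The plan is to prove the two implications separately; the reverse direction is essentially immediate from the definitions, while the forward direction is where the totality hypothesis does the work. Throughout I would keep in mind that $\mathcal{M}(R,S)\subseteq S$, so that on both sides the element $m$ may be taken in $S$, and hence the pair $(m,s)$ lies in $X\times X$ whenever $s\in S$, allowing totality of $R$ on $X$ to be invoked.

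For the ``if'' direction I would assume that $m\in S$ satisfies $m\in R(s)$ for every $s\in S$ and verify the defining condition of $R$-maximality directly. Given any $s\in S$ with $s\in R(m)$, the required conclusion $m\in R(s)$ is already guaranteed by the standing hypothesis; thus the implication in the definition of maximality holds, and $m\in\mathcal{M}(R,S)$. Note that totality is not needed here, nor is the antecedent $s\in R(m)$ actually used.

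For the ``only if'' direction I would fix $m\in\mathcal{M}(R,S)$ and an arbitrary $s\in S$, and aim to show $m\in R(s)$. The key device is totality: applied to the pair $(m,s)\in X\times X$, it yields the dichotomy that either $s\in R(m)$ or $m\in R(s)$. In the second case there is nothing to prove. In the first case, since $s\in S$ and $s\in R(m)$, the maximality of $m$ forces $m\in R(s)$. In either case $m\in R(s)$, and since $s$ was arbitrary in $S$ the desired conclusion follows. I expect the only real (and minor) obstacle to be the bookkeeping about membership needed to apply totality correctly: one must confirm that $m\in S\subseteq X$ so that the dichotomy is available, and observe that the degenerate case $s=m$ causes no difficulty, since totality applied to $(m,m)$ already gives $m\in R(m)$.
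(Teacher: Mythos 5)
Your proposal is correct and follows essentially the same route as the paper: the ``if'' direction is immediate from the definition of maximality, and the ``only if'' direction invokes totality of $R$ on the pair $(m,s)$ and then maximality of $m$ (the paper phrases this as a proof by contradiction, you as a direct dichotomy, which is an inessential difference). No gaps.
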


\begin{proof}
\textit{If part.} Suppose $m\in R(s)$ for all $s\in S$. The definition of a
$R$-maximal element on $S$ directly implies that $m$ is $R$-maximal on $S$. So
$m\in\mathcal{M}(R,S)$.

\noindent\textit{Only if part.} Suppose $m\in\mathcal{M}(R,S)$ and $s$ is an
arbitrary element of $S$. If $m\notin R(s)$ then the totality of $R$ implies
$s\in R(m)$ and we obtain a contradiction with the assumption that
$m\in\mathcal{M}(R,S)$. Therefore $m\in R(s)$ for all $s\in S$.
\end{proof}

\ \

Remark \ref{JOY} is an immediate consequence of Lemma \ref{BASE}.

\begin{remark}
\label{JOY}Let $X$ be a set and $S\subseteq X$. Let $u:X\rightarrow\mathbb{R}$
be a function and $R$ be the relation on $X$ defined by%
\begin{equation}
R(x)=\{y\in X:u(y)\geq u(x)\}\text{ for all }x\in X\text{.}\label{JENNY}%
\end{equation}
Then $R$ is a total preorder and $\arg\max_{s\in S}u(s)=\mathcal{M}(R,S)$.
\end{remark}

\begin{lemma}
\label{PEL}Let $R^{\bullet}$ and $R^{\circ}$ be relations on a set $X$. Assume
that $S\subseteq X$ and that $R^{\bullet}$ is total. If $R^{\bullet}\subseteq
R^{\circ}$ then $\mathcal{M}(R^{\bullet},S)\subseteq\mathcal{M}(R^{\circ},S)$.
\end{lemma}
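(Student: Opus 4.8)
The plan is to prove Lemma \ref{PEL} directly from the definition of maximality, using Lemma \ref{BASE} to exploit the totality of $R^{\bullet}$. Fix an arbitrary element $m\in\mathcal{M}(R^{\bullet},S)$; the goal is to show $m\in\mathcal{M}(R^{\circ},S)$. Since $R^{\bullet}$ is total and $m$ is $R^{\bullet}$-maximal on $S$, Lemma \ref{BASE} gives the convenient reformulation $m\in R^{\bullet}(s)$ for every $s\in S$.

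From here I would invoke the hypothesis $R^{\bullet}\subseteq R^{\circ}$. For each $s\in S$ we have $(m,s)\in R^{\bullet}$, hence $(m,s)\in R^{\circ}$, i.e. $m\in R^{\circ}(s)$ for every $s\in S$. It then remains to conclude that $m$ is $R^{\circ}$-maximal on $S$. Unwinding the definition of an $R^{\circ}$-maximal element, I must check that if $s\in S$ and $s\in R^{\circ}(m)$ then $m\in R^{\circ}(s)$; but the implication's conclusion $m\in R^{\circ}(s)$ already holds for every $s\in S$ regardless of the antecedent, so the defining condition is satisfied vacuously-strongly. Therefore $m\in\mathcal{M}(R^{\circ},S)$, and since $m$ was arbitrary, $\mathcal{M}(R^{\bullet},S)\subseteq\mathcal{M}(R^{\circ},S)$.

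There is essentially no hard obstacle here: the argument is a two-line chase through the definitions, and the only subtlety worth flagging is that the totality assumption is used exclusively to pass through Lemma \ref{BASE}'s equivalence for $R^{\bullet}$, not for $R^{\circ}$. It is worth stressing that $R^{\circ}$ is \emph{not} assumed total, so one cannot symmetrically apply Lemma \ref{BASE} to $R^{\circ}$; instead I rely on the fact that ``$m\in R^{\circ}(s)$ for all $s\in S$'' is a sufficient (though not, for a non-total relation, necessary) condition for $R^{\circ}$-maximality, exactly as recorded in the \emph{if part} of Lemma \ref{BASE}. This asymmetry is the one place where a careless reader might expect a totality hypothesis on $R^{\circ}$ and be surprised to find it unnecessary.
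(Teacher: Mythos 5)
Your proof is correct and follows essentially the same route as the paper's: both hinge on Lemma \ref{BASE} to convert $R^{\bullet}$-maximality into ``$m\in R^{\bullet}(s)$ for all $s\in S$'' and then push this through the inclusion $R^{\bullet}\subseteq R^{\circ}$. The only difference is presentational --- you argue directly while the paper argues by contradiction --- and your observation that no totality of $R^{\circ}$ is needed (only the sufficiency direction of Lemma \ref{BASE}) is accurate.
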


\begin{proof}
Assume that $R^{\bullet}\subseteq R^{\circ}$ and $m\in\mathcal{M}(R^{\bullet
},S)$. By way of contradiction, suppose $m\notin\mathcal{M}(R^{\circ},S)$.
Then there exists $s\in S$ such that $s\in R^{\circ}(m)$ and $m\notin
R^{\circ}(s)$. Lemma \ref{BASE} ensures that $m\in R^{\bullet}(s)$. As
$m\notin R^{\circ}(s)$, the assumption that $R^{\bullet}\subseteq R^{\circ}$
implies $m\notin R^{\bullet}(s)$: a contradiction with $m\in R^{\bullet}(s)$.
\end{proof}

\begin{lemma}
\label{AIUT}Suppose $R$ is\ a total preorder on a set $X$. Assume that
$S\subseteq X$, that $x\in S$ and that $y\in\mathcal{M}(R,S)$.

\begin{enumerate}
\item If $x\in R(y)$ then $x\in\mathcal{M}(R,S)$.

\item If $s\in S$ and $s\notin R(x)$ then $y\in R(s)$ and $s\notin R(y)$.
\end{enumerate}
\end{lemma}

\begin{proof}
1. Suppose $x\in R(y)$. As $y\in\mathcal{M}(R,S)$, Lemma \ref{BASE} ensures
that $y\in R(s)$ for all $s\in S$. The transitivity of $R$ then implies that
$x\in R(s)$ for all $s\in S$. Noted this, Lemma \ref{BASE} ensures that
$x\in\mathcal{M}(R,S)$.

2. Suppose $s\in S$ and $s\notin R(x)$. Then Lemma \ref{BASE} ensures that
$y\in R(s)$ and $s\notin\mathcal{M}(R,S)$. As $s\notin\mathcal{M}(R,S)$, part
1 of Lemma \ref{AIUT} implies $s\notin R(y)$.
\end{proof}

\paragraph*{Spaces and operators\newline}

\noindent A real vector space is henceforth abbreviated by RVS. Analogously, a
topological real vector space is abbreviated by TRVS and a locally convex
topological real vector space by LCS. When $V$ is a TRVS, its topological dual
is henceforth denoted by $V^{\ast}$. The Minkowski sum of two subsets $A$ and
$B$ of a RVS is denoted by $A+B$ and we simply denote by $A-B$ the Minkowski
sum of $A$ and $-B$; when $A=\{a\}$ we simply write $a+B$ instead of the more
cumbersome $\{a\}+B$. Let $V$ be a RVS, a subset $C$ of $V$ is a \textbf{cone
in }$V$ iff $\lambda C\subseteq C$ for all $\lambda\in\mathbb{R}_{++}$ (and
hence a cone need not be nonempty and need not contain the zero vector). A
cone $C$ in a RVS is a \textbf{pointed cone} iff $C\cap-C\subseteq\{0\}$.

\begin{remark}
\label{TENTO}A cone $C$ in a RVS is convex if and only if $C+C\subseteq C$.
\end{remark}

\begin{lemma}
\label{ERRA}Let $V$ be a RVS and $C$ be a convex cone in $V$.

\begin{enumerate}
\item Then $C\cup\{0\}$ is a convex cone.

\item Suppose $C$ is pointed. Then $C\backslash\{0\}$ is a convex cone.
\end{enumerate}
\end{lemma}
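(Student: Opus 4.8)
The plan is to treat both parts uniformly by exploiting Remark \ref{TENTO}, which says that a cone is convex precisely when it is closed under Minkowski addition. Thus for each of the two candidate sets I would first check that it is a cone, i.e. stable under multiplication by every $\lambda\in\mathbb{R}_{++}$, and then verify the single inclusion $A+A\subseteq A$ supplied by the remark. Throughout I would use only the hypothesis that $C$ itself is a convex cone (so $\lambda C\subseteq C$ for $\lambda\in\mathbb{R}_{++}$ and, again by Remark \ref{TENTO}, $C+C\subseteq C$), invoking the pointedness hypothesis $C\cap-C\subseteq\{0\}$ only in the second part.

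For part 1, I would first note that $\lambda(C\cup\{0\})=(\lambda C)\cup\{0\}\subseteq C\cup\{0\}$ for each $\lambda\in\mathbb{R}_{++}$, since $\lambda\cdot 0=0$ and $\lambda C\subseteq C$; hence $C\cup\{0\}$ is a cone. For convexity I would expand the Minkowski sum by distributing it over the union, obtaining $(C\cup\{0\})+(C\cup\{0\})=(C+C)\cup C\cup\{0\}$, and then observe that $C+C\subseteq C$ because $C$ is a convex cone; the whole sum therefore lies in $C\cup\{0\}$, and Remark \ref{TENTO} delivers convexity.

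For part 2, the cone property of $C\backslash\{0\}$ is again immediate: if $x\in C$ with $x\neq 0$ and $\lambda\in\mathbb{R}_{++}$, then $\lambda x\in C$ and $\lambda x\neq 0$, so $\lambda x\in C\backslash\{0\}$. For the inclusion $(C\backslash\{0\})+(C\backslash\{0\})\subseteq C\backslash\{0\}$ I would take $x,y\in C\backslash\{0\}$ and note $x+y\in C+C\subseteq C$; the one genuinely non-routine step, and the only place the pointedness hypothesis enters, is ruling out $x+y=0$. This I would handle by contradiction: $x+y=0$ forces $y=-x$, so that $x\in C$ and $-x=y\in C$, i.e. $x\in C\cap(-C)\subseteq\{0\}$ by pointedness, contradicting $x\neq 0$. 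Hence $x+y\in C\backslash\{0\}$, and Remark \ref{TENTO} again gives convexity. I expect no real obstacle beyond correctly locating where pointedness is needed; the rest is a direct unwinding of the definitions.
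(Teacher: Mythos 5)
Your proof is correct and follows essentially the same route as the paper: part 1 is verified via the Minkowski-sum expansion $(C\cup\{0\})+(C\cup\{0\})=(C+C)\cup C\cup\{0\}$ together with Remark \ref{TENTO}, and part 2 hinges on pointedness in exactly the same way. The only cosmetic difference is that for part 2 the paper checks convex combinations $\lambda x+(1-\lambda)y$ directly rather than going through the closure of $C\backslash\{0\}$ under addition, but the two reductions are equivalent.
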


\begin{proof}
1. Put $C_{0}=C\cup\{0\}$. Part 1 of Lemma 1 in \cite{Cepa19} guarantees that
$C_{0}$ is a cone in $V$. We are done if we show that $C_{0}+C_{0}\subseteq
C_{0}$. Noting that
\[
C_{0}+C_{0}=(C\cup\{0\})+(C\cup\{0\})=(C+C)\cup C\cup C\cup\{0\}
\]
because of basic properties of the Minkowski sum and that $C+C\subseteq C$ by
the convexity of the cone $C$, we conclude that $C_{0}+C_{0}\subseteq C_{0}$.

2. The fact that $C\backslash\{0\}$ is a cone follows immediately from the
definition of a cone. To prove that $C\backslash\{0\}$ is convex just note
that the convex combination of any two points in $C\backslash\{0\}$ belongs to
$C$ by the convexity of $C$ and that it cannot be equal to the vector zero as
$C$ is pointed.
\end{proof}

\

Let $V$ be a RVS and $X$ be a subset of $V$. We denote by $\operatorname*{co}%
(X)$ (resp. $\operatorname*{cone}(X)$, $\operatorname*{aff}(X)$) the convex
hull (resp. the conic hull, the affine hull) of $X$ recalling that
$\operatorname*{cone}(X)=\{\lambda x:(\lambda,x)\in\mathbb{R}_{+}\times X\}$
and hence that $\operatorname*{cone}(X)$ contains the zero vector whenever
$X\neq\emptyset$. When $V$ is endowed with some topology, we denote by
$\operatorname*{bd}(X)$ (resp. $\operatorname*{int}(X)$, $\operatorname*{cl}%
(X)$) the boundary (resp. the topological interior, the topological closure)
of $X$.\ \ \

\paragraph*{Cone-based relational and convexity notions\ }

\noindent Let $V$ be a RVS, $C$ be a cone in $V$ and $S$ be a subset of $V$.
The following four definitions have been introduced in \cite{Cepa19}---and we
refer to the mentioned article for a discussion---while the last definition is
a new cone-based extension of the notion of an antichain. We say that:

\begin{itemize}
\item $S$ is $C$\textbf{-antichain-convex} iff
\[
(x,y,\lambda)\in S\times S\times\lbrack0,1]\text{ and }y-x\notin
C\cup-C\Rightarrow\lambda x+(1-\lambda)y\in S\text{;}%
\]

\item $S$ is \textbf{decomposably }$C$\textbf{-antichain-convex} iff there
exists a finite collection $\{S_{1},\ldots,S_{n}\}$ of $C$-antichain-convex
subsets of $V$ such that%
\[
S=S_{1}+\ldots+S_{n}\text{;}%
\]

\item $S$ is $C$\textbf{-upward} iff
\[
(x,y)\in S\times S\text{ and }y-x\in C\Rightarrow y\in S\text{;}%
\]

\item $S$ is $C$\textbf{-downward} iff
\[
(x,y)\in S\times S\text{ and }x-y\in C\Rightarrow y\in S\text{;}%
\]

\item $S$ is a $C$\textbf{-antichain} iff%
\[
(x,y)\in S\times S\text{ and }x\neq y\text{ }\Rightarrow\text{ }y-x\notin
C\cup-C\text{.}%
\]

\end{itemize}

\noindent Recall that when $C\subseteq\{0\}$ any convex set is both
$C$-antichain-convex and $C$-upward (as well as $C$-downward).

\begin{remark}
\label{K2} Let $V$ be a RVS, $C$ be a cone in $V$ and $S$ be a subset of $V$.
Noting that $-C$ is a cone, from the previous definitions it follows that $S$
is (decomposably) $C$-antichain-convex if and only if $S$ is (decomposably)
$-C$-antichain-convex and that $S$ is $C$-downward if and only if $S$ is $-C$-upward.
\end{remark}

The following facts are two general results of some importance for the sequel
of this work. Theorem \ref{T:CH} is essentially new while Theorem
\ref{STANGHI} uses---but does not directly follow from---a previous result
shown in \cite{Cepa19}.

\begin{theorem}
\label{T:CH} Let $V$ be a RVS, $C$ be a cone in $V$ and $S$ be a subset of $V
$.

\begin{enumerate}
\item $S$ is $\operatorname*{co}(C)$-upward if and only if $S$ is $C$-upward.

\item $S$ is $\operatorname*{co}(C)$-downward if and only if $S$ is $C$-downward.
\end{enumerate}
\end{theorem}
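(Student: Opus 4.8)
The plan is to recast $C$-upwardness in Minkowski-sum form and exploit the fact that a convex combination of cone elements is, after absorbing the positive coefficients into the cone, just a finite sum of cone elements. Concretely, $S$ is $C$-upward exactly when $S+C\subseteq S$, and $S$ is $\operatorname{co}(C)$-upward exactly when $S+\operatorname{co}(C)\subseteq S$. With these reformulations, the nontrivial content of part 1 is a single inclusion. The easy implication is that $\operatorname{co}(C)$-upwardness yields $C$-upwardness: since $C\subseteq\operatorname{co}(C)$, one immediately gets $S+C\subseteq S+\operatorname{co}(C)\subseteq S$, so no work is required there.

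For the converse I would first isolate the elementary but decisive observation that every element of $\operatorname{co}(C)$ is a finite sum of elements of $C$. Indeed, any $c\in\operatorname{co}(C)$ may be written as $c=\sum_{i=1}^{k}\lambda_{i}c_{i}$ with $c_{i}\in C$, $\lambda_{i}\geq 0$ and $\sum_{i}\lambda_{i}=1$. Discarding the indices with $\lambda_{i}=0$ and using that $C$ is a cone (so $\lambda_{i}c_{i}\in C$ whenever $\lambda_{i}>0$), I rewrite $c$ as a sum $d_{1}+\cdots+d_{m}$ of finitely many, and at least one, vectors $d_{j}\in C$. Assuming $S+C\subseteq S$, a short induction on $m$ then gives $x+c=x+d_{1}+\cdots+d_{m}\in S$ for every $x\in S$, by adding the $d_{j}$ one at a time and staying in $S$ at each step. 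Hence $S+\operatorname{co}(C)\subseteq S$, which is $\operatorname{co}(C)$-upwardness.

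Part 2 I would not argue directly but reduce to part 1 through Remark \ref{K2}. Since $S$ is $C$-downward iff $S$ is $-C$-upward, and since taking convex hulls commutes with negation so that $\operatorname{co}(-C)=-\operatorname{co}(C)$, one has the chain: $S$ is $\operatorname{co}(C)$-downward iff $S$ is $-\operatorname{co}(C)$-upward iff $S$ is $\operatorname{co}(-C)$-upward. Applying part 1 to the cone $-C$ converts the last statement into ``$S$ is $-C$-upward'', and a final appeal to Remark \ref{K2} rewrites this as ``$S$ is $C$-downward''. Chaining these equivalences closes part 2 with no new computation.

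The step demanding the most care is the decomposition in the converse of part 1, precisely because a cone here is only required to be closed under strictly positive scalings and need neither be convex nor contain the origin. This is why the vanishing coefficients must be \emph{discarded} rather than absorbed, and why one must check that at least one positive coefficient survives, so that $c$ really is a nonempty finite sum of cone vectors before the induction on $S+C\subseteq S$ is invoked; everything else is routine.
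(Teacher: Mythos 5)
Your proof is correct and takes essentially the same route as the paper's: the easy direction via $C\subseteq\operatorname{co}(C)$, and the converse by writing an element of $\operatorname{co}(C)$ as a finite sum of cone elements (absorbing the strictly positive coefficients into the cone) and adding them to $s\in S$ one at a time by induction. Part 2 is likewise handled exactly as in the paper, by reduction to part 1 through Remark \ref{K2} and the identity $\operatorname{co}(-C)=-\operatorname{co}(C)$.
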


\begin{proof}
1. As $C\subseteq\operatorname*{co}(C)$, if $S$ is $\operatorname*{co}%
(C)$-upward then part 2 of Proposition 6 in \cite{Cepa19} guarantees that $S$
is $C$-upward. So henceforth suppose $S$ is $C$-upward. We are done if we show
that $S$ is $\operatorname*{co}(C)$-upward. When either $S$ or $C$ is empty,
the assertion is trivially true. So suppose $S$ and $C$ are nonempty and pick
$s\in S$ and $c\in\operatorname*{co}(C)$. Thus there exist $n$ elements
$c_{1},\ldots,c_{n}$ in $C$ and $\alpha$ in $\mathbb{R}_{++}^{n}$ such that
$\alpha_{1}+\ldots+\alpha_{m}=1$ and $c=\alpha_{1}c_{1}+\ldots+\alpha_{n}%
c_{n}$. As $C$ is a cone, we have that $\alpha_{i}c_{i}\in C{\text{ for all }%
}i=1,\ldots,n$. Put $x_{0}=s$ and $x_{i}=x_{i-1}+\alpha_{i}c_{i}$ for all
$i=1,\ldots,n$. Reasoning by induction, note that $x_{i}\in S$ for all
$i=1,\ldots,n$ as it is the sum of the element $x_{i-1}$ of the $C$-upward set
$S$ and of the element $\alpha_{i}c_{i}$ of the cone $C$. Being $x_{n}=s+c$,
we conclude that $S$ is $\operatorname*{co}(C)$-upward.

2. A consequence of part 1 of Theorem \ref{T:CH}, Remark \ref{K2} and the fact
that $\operatorname*{co}(-C)=-\operatorname*{co}(C)$.
\end{proof}

\begin{theorem}
\label{STANGHI}Let $V$ be a RVS and $C$ be a cone in $V$. Suppose $X$ is a
decomposably $C$-antichain-convex subset of $V$.

\begin{enumerate}
\item If $X$ is $C$-upward then $X$ is convex.

\item If $X$ is $C$-downward then $X$ is convex.
\end{enumerate}
\end{theorem}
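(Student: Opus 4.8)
The plan is to reduce part~2 to part~1 and then establish part~1 by a direct decomposition argument. For the reduction I would invoke Remark~\ref{K2}: since $-C$ is a cone, $X$ is $C$-downward if and only if $X$ is $-C$-upward, and $X$ is decomposably $C$-antichain-convex if and only if it is decomposably $-C$-antichain-convex. Hence a $C$-downward decomposably $C$-antichain-convex set is precisely a $-C$-upward decomposably $-C$-antichain-convex set, so part~1 applied to the cone $-C$ delivers its convexity. All the content therefore sits in part~1.

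For part~1, write $X=S_{1}+\cdots+S_{n}$ with each $S_{i}$ being $C$-antichain-convex, and recall that $C$-upwardness amounts to $X+C\subseteq X$. Fix $a,b\in X$ and $\lambda\in\lbrack0,1]$; the cases $\lambda\in\{0,1\}$ are immediate, so assume $\lambda\in(0,1)$. Choose decompositions $a=\sum_{i}a_{i}$ and $b=\sum_{i}b_{i}$ with $a_{i},b_{i}\in S_{i}$, and put $c_{i}=\lambda a_{i}+(1-\lambda)b_{i}$, so that $\lambda a+(1-\lambda)b=\sum_{i}c_{i}$. I would then split the indices according to the order relation between $a_{i}$ and $b_{i}$: if $b_{i}-a_{i}\notin C\cup-C$, then $C$-antichain-convexity of $S_{i}$ gives $c_{i}\in S_{i}$ directly; if $b_{i}-a_{i}\in C$, then $c_{i}=a_{i}+(1-\lambda)(b_{i}-a_{i})$ with $a_{i}\in S_{i}$ and $(1-\lambda)(b_{i}-a_{i})\in C$ (cone scaling by the positive scalar $1-\lambda$); and if $b_{i}-a_{i}\in-C$, then $c_{i}=b_{i}+\lambda(a_{i}-b_{i})$ with $b_{i}\in S_{i}$ and $\lambda(a_{i}-b_{i})\in C$.

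Collecting the pieces, each $c_{i}$ is an element $s_{i}\in S_{i}$ plus a vector of $C\cup\{0\}$; summing over $i$ yields
\[
\lambda a+(1-\lambda)b=\Bigl(\sum_{i}s_{i}\Bigr)+w,
\]
where $\sum_{i}s_{i}\in S_{1}+\cdots+S_{n}=X$ and $w$ is either $0$ or a sum of finitely many vectors of $C$. The concluding step is to absorb $w$ into $X$: from $X+C\subseteq X$ one obtains, by iteration, $X+\underbrace{C+\cdots+C}_{k}\subseteq X$ for every $k$, whence $\bigl(\sum_{i}s_{i}\bigr)+w\in X$ and thus $\lambda a+(1-\lambda)b\in X$, establishing convexity.

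I expect the main difficulty to be bookkeeping rather than conceptual. Two points deserve care: organizing the three-way case split cleanly (in particular the degenerate situations $a_{i}=b_{i}$ or $b_{i}-a_{i}\in C\cap-C$, where either of the comparable branches, or the antichain branch when $0\notin C$, may be used); and—crucially—observing that one need \emph{not} assume $C$ to be convex in order to absorb the accumulated cone part $w$, since iterating the single inclusion $X+C\subseteq X$ already yields $X+C+\cdots+C\subseteq X$. This last observation is what lets the argument go through for an arbitrary cone $C$.
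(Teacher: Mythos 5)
Your proof is correct, and it reaches the conclusion by a genuinely different route from the paper's. The paper does not argue directly: it puts $K=\operatorname{co}(C\cup\{0\})$, upgrades the summands to $K$-antichain-convexity and $X$ to $K$-upwardness (the latter via Theorem \ref{T:CH}), uses $0\in K$ to get $X=X+K$, and thereby rewrites $X$ as a Minkowski sum of $n+1$ sets that are $K$-antichain-convex with one summand ($K$ itself) $K$-upward, so that part 1 of Corollary 1 in \cite{Cepa19} applies. Your argument instead verifies convexity by hand: the three-way case split on each pair $(a_i,b_i)$ shows that $\lambda a+(1-\lambda)b$ lies in $X$ plus a finite sum of elements of $C\cup\{0\}$, and iterating $X+C\subseteq X$ absorbs that cone part. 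The two proofs are morally parallel --- the paper's passage to the convex cone $K$ together with Theorem \ref{T:CH} (whose own proof is exactly your iteration trick) plays the role of your absorption step, and Corollary 1 of \cite{Cepa19} encapsulates your componentwise case analysis --- but yours is self-contained, never convexifies the cone, and makes explicit why no convexity assumption on $C$ is needed. You also handle the degenerate cases ($a_i=b_i$, or $b_i-a_i\in C\cap-C$) correctly. The only caveat is notational: the paper's displayed definition of $C$-upward literally quantifies over $(x,y)\in S\times S$, but the intended (and everywhere-used) meaning is $S+C\subseteq S$, which is the characterization your absorption step relies on; as long as that reading is adopted, your argument is complete.
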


\begin{proof}
1.\ Suppose $X$ is $C$-upward.\ The decomposable $C$-antichain-convexity of
$X$ implies the existence of $C$-antichain-convex subsets $X_{1},\ldots,X_{n}
$ of $V$ such that $X_{1}+\ldots+X_{n}=X$. Put%
\[
C_{0}=C\cup\{0\}\text{ and }K=\operatorname*{co}(C_{0}).
\]
Part 6 of Proposition 3 in \cite{Cepa19} and the inclusion $C\subseteq K$
guarantee that $X_{1},\ldots,X_{n}$ are $K$-antichain-convex. Part 1 of Lemma
4 in \cite{Cepa19} and part 1 of Theorem \ref{T:CH} guarantee that $X$ is
$K$-upward. As $X$ is $K$-upward and $K$ contains the zero vector, part 3 of
Lemma 5 in \cite{Cepa19} implies $X+K=X$ and hence%
\[
X=X_{1}+\ldots+X_{n}+K\text{.}%
\]
So $X$ can be expressed as the sum of $n+1$ sets which are $K$%
-antichain-convex subsets of $V$. Note that $K$ (i.e., the last of the $n+1$
addends) is also $K$-upward: this is a consequence of the fact that $K$ is a
convex cone (see Remark \ref{TENTO}). So $X$ is convex by part 1 of Corollary
1 in \cite{Cepa19}.

2. A consequence of part 1 of Theorem \ref{STANGHI} and Remark \ref{K2}.
\end{proof}

\section{\label{SEC}Existence of $C$-Pareto dominating elements}

\subsection{Preliminary definitions}

Let $V$ be a RVS, $C$ be a cone in $V$ and\ $Y$\ be a subset of $V$. An
element $v$ of $V$ is $C$\textbf{-Pareto dominated} by an element $z$ of $V$
iff $z\in v+C$. When $v\in V$ is $C$-Pareto dominated by $z\in V$, we say that
$z$ is $C$\textbf{-Pareto dominating }$v$. An element $y$ of $Y$ is a $C
$\textbf{-Pareto optimum of }$Y$ iff the sets $Y\backslash\{y\}$ and $y+C$ are
disjoint. The \textbf{set of }$C$\textbf{-Pareto optima of} $Y$ is denoted by
$\mathcal{O}(C,Y)$.

\begin{remark}
The notion of a $C$-Pareto optimum is not new. For instance, when $C$ is a
cone containing the zero vector, the definition of a $C$-Pareto optimum boils
down to that of \emph{Pareto optimality with respect to }$C$ in
\cite[Definition 1]{Jofr15} and, when $C$ is a closed convex cone containing
the zero vector, our definition is the exact \textquotedblleft
dual\textquotedblright\ of that of a \emph{Pareto minimal} point given in
\cite[Definition 9.1]{Mord18}. On the relation with other nonequivalent
definitions---and on the implications of the pointedness of $C$---see the
discussion at p. 452 in \cite{Mord18}.
\end{remark}

\subsection{Main result}

Lemma \ref{HM} guarantees the existence of $C$-Pareto dominating elements.
Lemma \ref{HM}, as generalized by Theorem \ref{MS}, is the main finding of the
paper and plays an important role in many subsequent results.

\begin{lemma}
\label{HM}Let $V$ be a RVS and $C$ be a convex cone in $V$ containing the zero
vector. Suppose $Y$ is a $C$-antichain-convex subset of $V$.

\begin{enumerate}
\item Each $y$ in $\operatorname*{co}(Y)$ is $C$-Pareto dominated by at least
one $z$ in $Y$.

\item Each $y$ in $\operatorname*{co}(Y)$ is $C$-Pareto dominating at least
one $x$ in $Y$.
\end{enumerate}
\end{lemma}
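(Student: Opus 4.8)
The plan is to prove part 1 by induction on the number of points appearing in a convex-combination representation of $y$, and then to deduce part 2 from part 1 by replacing $C$ with $-C$. Indeed, by Remark \ref{K2} the set $Y$ is $-C$-antichain-convex, and $-C$ is again a convex cone containing the zero vector; applying part 1 to the cone $-C$ yields, for each $y\in\operatorname*{co}(Y)$, an element $x\in Y$ with $x-y\in-C$, i.e.\ $y-x\in C$, i.e.\ $y\in x+C$, which is exactly the assertion that $y$ is $C$-Pareto dominating $x$. So only part 1 requires work, and I may assume $Y\neq\emptyset$ (otherwise $\operatorname*{co}(Y)=\emptyset$ and there is nothing to prove), so that every $y\in\operatorname*{co}(Y)$ has the form $y=\sum_{i=1}^{n}\lambda_{i}y_{i}$ with $y_{i}\in Y$, $\lambda_{i}>0$ and $\sum_{i=1}^{n}\lambda_{i}=1$.

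The heart of the argument is the two-point case, which I would isolate first. Suppose $y=\lambda a+(1-\lambda)b$ with $a,b\in Y$ and $\lambda\in[0,1]$. If $b-a\notin C\cup-C$, then $C$-antichain-convexity gives $y\in Y$, and $z:=y$ dominates $y$ since $0\in C$. If instead $b-a\in C$, then $z:=b\in Y$ works, because $z-y=\lambda(b-a)\in C$ as $C$ is a cone and $\lambda\geq 0$ (the degenerate case $\lambda=0$ being covered by $0\in C$); symmetrically, if $a-b\in C$ then $z:=a\in Y$ works. Thus in every case $y$ is $C$-Pareto dominated by an element of $Y$.

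For the inductive step I would peel off one summand. The base case $n=1$ is immediate since $y\in Y$ and $0\in C$. Assuming the claim for combinations of $n$ points, write $y=\lambda_{1}y_{1}+(1-\lambda_{1})y'$, where $y'=\sum_{i=2}^{n+1}\tfrac{\lambda_{i}}{1-\lambda_{1}}y_{i}$ is a convex combination of $n$ points of $Y$ (if $\lambda_{1}=1$ we are back in the base case). By the inductive hypothesis there is $z'\in Y$ with $z'-y'\in C$. Now set $\widetilde{y}=\lambda_{1}y_{1}+(1-\lambda_{1})z'$, a convex combination of the two points $y_{1},z'\in Y$; by the two-point case there is $z\in Y$ with $z-\widetilde{y}\in C$. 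Finally $\widetilde{y}-y=(1-\lambda_{1})(z'-y')\in C$ by the cone property, so $z-y=(z-\widetilde{y})+(\widetilde{y}-y)\in C+C\subseteq C$, the last inclusion holding because $C$ is convex (Remark \ref{TENTO}). Hence $z\in Y$ dominates $y$, completing the induction.

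The only real obstacle is the bookkeeping in the inductive step: one must fuse two separate $C$-dominations (of $y'$ by $z'$, and of $\widetilde{y}$ by $z$) into a single one, and this is precisely where all three hypotheses on $C$ enter — the cone property to scale differences by the positive weights, convexity to secure $C+C\subseteq C$, and $0\in C$ to handle the reflexive and degenerate cases. No topology is needed, so the whole argument is purely algebraic.
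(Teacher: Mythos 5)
Your proof is correct and follows essentially the same route as the paper: induction on the number of points in the convex combination, peeling off one summand and case-splitting on whether the dominating element of the remainder is $C$-comparable to the peeled-off point. The only difference is organizational — you isolate the two-point case and fuse the two dominations uniformly via $C+C\subseteq C$, whereas the paper carries out the same algebra inside each of the three cases directly.
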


\begin{proof}
1. Part 1 of Lemma \ref{HM} is true if we show the validity of the following
(equivalent) assertion.

\ \ \ \ \ \ %

\begin{tabular}
[c]{p{1.5cm}}%
\textbf{Assertion.}%
\end{tabular}%
\begin{tabular}
[c]{p{8.8cm}}%
\textit{If }$y$\textit{\ can be expressed as the convex combination of }%
$n\in\mathbb{N}$\textit{\ elements of }$Y$\textit{\ then there exists }%
$z$\textit{\ in }$Y$\textit{\ such that }$z\in y+C$\textit{.}%
\end{tabular}

\ \

\noindent The Assertion is trivially true if $n=1$ in that $0\in C$. The rest
of the proof is by induction. Let $m\in\mathbb{N}\backslash\{1\}$ and assume
as an induction hypothesis that the Assertion is true if $n<m-1$. We show that
the Assertion is true when $n=m$. Assume that $n=m$ and that $y$ can be
expressed as the convex combination of $m$ elements of $Y$. Then there exist
$y_{1},\ldots,y_{m}$ in $Y$ and $\alpha$ in $\mathbb{R}_{+}^{m} $ such that
$\alpha_{1}+\ldots+\alpha_{m}=1$ and%
\[
y=\alpha_{1}y_{1}+\ldots+\alpha_{m}y_{m}\text{.}%
\]
Because of the induction hypothesis, we can assume without loss of generality
that $\alpha_{i}>0$ for all $i=1,\ldots,m$. Put%
\[
y_{0}=\frac{\alpha_{2}y_{2}+\ldots+\alpha_{m}y_{m}}{1-\alpha_{1}}\text{ and
}\alpha_{0}=1-\alpha_{1}\text{.}%
\]
Note that%
\[
y=\alpha_{0}y_{0}+(1-\alpha_{0})y_{1}\text{.}%
\]
and that $y_{0}$ is the convex combination of the $m-1$ elements $y_{2}%
$,\ldots,$y_{m}$ with coefficients $\alpha_{2}/(1-\alpha_{1})$,\ldots
,$\alpha_{m}/(1-\alpha_{1})$. Then the induction hypothesis guarantees the
existence of $c_{0}\in C$ such that%
\[
y_{0}+c_{0}\in Y\text{.}%
\]

\noindent$\bullet$\quad If $y_{0}+c_{0}\notin y_{1}+(C\cup-C)$ then
$\alpha_{0}(y_{0}+c_{0})+(1-\alpha_{0})y_{1}\in Y$ by the $C$%
-antichain-convexity of $Y$. So%
\[
y+\alpha_{0}c_{0}=\alpha_{0}y_{0}+(1-\alpha_{0})y_{1}+\alpha_{0}c_{0}\in
Y\text{.}%
\]
Noting that $\alpha_{0}c_{0}$ is an element of the cone $C$, we conclude that
the Assertion is true: just identify $z$ with $y+\alpha_{0}c_{0}$.

\noindent$\bullet$\quad If $y_{0}+c_{0}\in y_{1}+C$ then $y_{0}+c_{0}%
=y_{1}+c_{1}$ for some $c_{1}\in C$. So%
\[
(1-\alpha_{0})(y_{0}+c_{0})=(1-\alpha_{0})(y_{1}+c_{1})\text{.}%
\]
Adding the vector $\alpha_{0}(y_{0}+c_{0})$ to both sides of the previous
equality we get%
\[
y_{0}+c_{0}=\alpha_{0}y_{0}+(1-\alpha_{0})y_{1}+\alpha_{0}c_{0}+(1-\alpha
_{0})c_{1}=y+\alpha_{0}c_{0}+(1-\alpha_{0})c_{1}\text{.}%
\]
The convexity of the cone $C$ implies $\alpha_{0}c_{0}+(1-\alpha_{0})c_{1}\in
C$ and hence%
\[
y_{0}+c_{0}\in y+C\text{.}%
\]
We conclude that the Assertion is true: just identify $z$ with $y_{0}+c_{0}$.

\noindent$\bullet$\quad If $y_{0}+c_{0}\in y_{1}-C$ then $y_{0}+c_{0}%
=y_{1}-c_{1}$ for some $c_{1}\in C$. So%
\[
y_{1}-y_{0}=c_{0}+c_{1}\text{.}%
\]
The convexity of $C$ then implies $c_{0}+c_{1}\in C$ and hence $y_{1}-y_{0}\in
C$. Therefore $\alpha_{0}(y_{1}-y_{0})\in C$. Note that $y_{1}-y=y_{1}%
-(\alpha_{0}y_{0}+(1-\alpha_{0})y_{1})=\alpha_{0}(y_{1}-y_{0})$ and hence that%
\[
y_{1}-y\in C\text{.}%
\]
So $y_{1}=y+c$ for some $c\in C$. We conclude that the Assertion is true: just
identify $z$ with $y_{1}$.

2. A consequence of part 1 of Lemma \ref{HM} and Remark \ref{K2}.
\end{proof}

\begin{theorem}
\label{MS}Let $V$ be a RVS and $C$ be a cone in $V$. Suppose $Y$ is a
decomposably $C$-antichain-convex subset of $V$ and put $K=\operatorname*{co}%
(C\cup\{0\})$.

\begin{enumerate}
\item Each $y$ in $\operatorname*{co}(Y)$ is $K$-Pareto dominated by at least
one $z$ in $Y$.

\item Each $y$ in $\operatorname*{co}(Y)$ is $K$-Pareto dominating at least
one $x$ in $Y$.
\end{enumerate}
\end{theorem}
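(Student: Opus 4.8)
The plan is to reduce Theorem \ref{MS} to Lemma \ref{HM}, exploiting the decomposition of $Y$ as a Minkowski sum together with the fact that the convex-hull operator commutes with Minkowski sums. First I would record that $K=\operatorname*{co}(C\cup\{0\})$ is a convex cone containing the zero vector: $C\cup\{0\}$ is a cone containing $0$ by part 1 of Lemma 1 in \cite{Cepa19}, and the convex hull of a cone is again a (convex) cone, so $0\in K$ and $K+K\subseteq K$ by Remark \ref{TENTO}. This is exactly the hypothesis under which Lemma \ref{HM} applies, with $K$ playing the role of the convex cone. Next, using the definition of decomposable $C$-antichain-convexity I would write $Y=Y_{1}+\ldots+Y_{n}$ with each $Y_{i}$ being $C$-antichain-convex; since $C\subseteq K$, part 6 of Proposition 3 in \cite{Cepa19} guarantees that each $Y_{i}$ is $K$-antichain-convex, exactly as in the proof of Theorem \ref{STANGHI}. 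Finally I would invoke the standard identity $\operatorname*{co}(Y)=\operatorname*{co}(Y_{1})+\ldots+\operatorname*{co}(Y_{n})$ (the convex hull of a Minkowski sum is the Minkowski sum of the convex hulls).

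With these pieces in place, part 1 follows by summing addend by addend. I would take $y\in\operatorname*{co}(Y)$ and write $y=w_{1}+\ldots+w_{n}$ with $w_{i}\in\operatorname*{co}(Y_{i})$. Applying part 1 of Lemma \ref{HM} to the $K$-antichain-convex set $Y_{i}$ (with the convex cone $K$) produces $z_{i}\in Y_{i}$ with $z_{i}\in w_{i}+K$, that is $z_{i}-w_{i}\in K$. Setting $z=z_{1}+\ldots+z_{n}\in Y_{1}+\ldots+Y_{n}=Y$, I get $z-y=\sum_{i}(z_{i}-w_{i})\in K+\ldots+K\subseteq K$, the last inclusion holding because $K$ is a convex cone. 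Hence $z\in y+K$ with $z\in Y$, which is precisely the conclusion of part 1.

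Part 2 I would obtain formally from part 1 together with Remark \ref{K2}, just as part 2 of Lemma \ref{HM} is derived from its part 1: by Remark \ref{K2} the set $Y$ is also decomposably $-C$-antichain-convex, and applying part 1 with $-C$ in place of $C$—noting that $\operatorname*{co}(-C\cup\{0\})=-\operatorname*{co}(C\cup\{0\})=-K$—yields, for each $y\in\operatorname*{co}(Y)$, an element $x\in Y$ with $x\in y-K$, i.e. $y\in x+K$, which is the assertion of part 2. The genuinely load-bearing step is the passage from $C$-antichain-convexity of each summand $Y_{i}$ to its $K$-antichain-convexity: this is where enlarging the cone to its ``convexification'' $K$ is essential, so that the convex-cone hypothesis of Lemma \ref{HM} is met; everything else is bookkeeping. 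The only points needing care are the degenerate cases—if some $Y_{i}$, and hence $Y$, is empty, or if $C=\emptyset$ so that $K=\{0\}$—which are handled vacuously or directly by the same machinery.
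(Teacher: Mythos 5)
Your proposal is correct and follows essentially the same route as the paper's own proof: decompose $Y$ into $C$-antichain-convex summands, upgrade each to $K$-antichain-convexity via part 6 of Proposition 3 in \cite{Cepa19}, use the commutation of $\operatorname*{co}$ with Minkowski sums, apply part 1 of Lemma \ref{HM} summand by summand, and add the resulting cone elements using the convexity of $K$; part 2 via Remark \ref{K2} is likewise the paper's argument. No gaps.
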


\begin{proof}
1. The assumption that $Y$ is decomposably $C$-antichain-convex implies the
existence of $C$-antichain-convex subsets $Y_{1},\ldots,Y_{n}$ of $V$ such
that $Y=Y_{1}+\ldots+Y_{n}$. The set $K$ is a convex cone in $V$ containing
the zero vector. As $C\subseteq K$, part 6 of Proposition 3 in \cite{Cepa19}
ensures that $Y_{1},\ldots,Y_{n}$ are $K$-antichain-convex. Suppose
$y\in\operatorname*{co}(Y)$. As the convex hull of the Minkowski sum of $n$
sets equals the Minkowski sum of their convex hulls, we have that
\[
y\in\operatorname*{co}(Y_{1})+\ldots+\operatorname*{co}(Y_{n})\text{.}%
\]
So there exists a tuple $(y_{1},\ldots,y_{n})$ in $\operatorname*{co}%
(Y_{1})\times\ldots\times\operatorname*{co}(Y_{n})$ such that $y=y_{1}%
+\ldots+y_{n}$. For each $i\in\{1,\ldots,n\}$, part 1 of Lemma \ref{HM}
implies the existence of $z_{i}\in Y_{i}$ such that $y_{i}$ is $K$-Pareto
dominated by $z_{i}$. Thus $z_{i}-y_{i}\in K$ for all $i\in\{1,\ldots,n\}$ and
so the convexity of the cone $K$ and Remark \ref{TENTO} together imply
\begin{equation}%
{\textstyle\sum\nolimits_{i=1}^{n}}
(z_{i}-y_{i})\in K\text{.}\label{KDE}%
\end{equation}
Put $z=z_{1}+\ldots+z_{n}$ and note that $z\in Y$ and that (\ref{KDE}) is
equivalent to $z-y\in K$. Therefore $z\in y+K$ and hence $y$ is $K$-Pareto
dominated by $z\in Y$.

2. A consequence of part 1 of Theorem \ref{MS} and Remark \ref{K2}.
\end{proof}

\subsection{\label{PPPO}On $C$-Pareto optima}

Theorem \ref{EQUO} shows a result of independent interest elucidating on the
maximality of Pareto optima. Theorem \ref{REZZINA} is the main result of this
Sect. \ref{PPPO}. It guarantees the equivalence of the set of $C$-Pareto
optima of a decomposably $C$-antichain-convex set $Y$ and that of its convex
hull when $C$ is a pointed convex cone. The last result has consequences of
interest for economic theory.\footnote{To provide a tangible example of an
implication of economic interest, note that part 2 of Theorem \ref{REZZINA}
allows us to generalize Proposition 5.F.2 in \cite{Masc95} by replacing in its
statement \textquotedblleft convex\textquotedblright\ with \textquotedblleft
decomposably $\mathbb{R}_{+}^{n}$-antichain-convex\textquotedblright\ since
any production vector that is profit-maximizing on $\operatorname*{co}(Y)$
must be profit-maximizing on $Y$.}

\begin{theorem}
\label{EQUO}Let $V$ be a RVS, $C$ be a cone in $V$ and $Y$ be a subset of $V$.
Let $D$ be the relation on $Y$ defined by%
\[
D(y)=\{x\in Y:y\text{ is }C\text{-Pareto dominated by }x\}\text{. }%
\]

\begin{enumerate}
\item Then $\mathcal{O}(C,Y)\subseteq\mathcal{M}(D,Y)$.

\item Assume that $C$ is pointed. Then $\mathcal{O}(C,Y)=\mathcal{M}(D,Y)$.
\end{enumerate}
\end{theorem}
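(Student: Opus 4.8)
The plan is to argue directly from the two definitions, since no totality or transitivity of $D$ is available here and the cleanest route is to unpack exactly what membership in each set means. Recall that $x\in D(y)$ says precisely $x-y\in C$, so $m\in\mathcal{M}(D,Y)$ amounts to the implication: for every $s\in Y$, if $s-m\in C$ then $m-s\in C$. On the other hand, $y\in\mathcal{O}(C,Y)$ means that no $x\in Y\backslash\{y\}$ satisfies $x-y\in C$, i.e. $(Y\backslash\{y\})\cap(y+C)=\emptyset$. With these reformulations in hand both inclusions become short deductions.

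For part 1, I would take $y\in\mathcal{O}(C,Y)$ and verify the maximality condition. Let $s\in Y$ with $s\in D(y)$, i.e. $s-y\in C$. If $s\neq y$, then $s\in(Y\backslash\{y\})\cap(y+C)$, contradicting $y\in\mathcal{O}(C,Y)$; hence $s=y$. But then $s-y=0$, and the very membership $s\in D(y)$ forces $0\in C$, so $y-s=0\in C$ gives $y\in D(s)$. This establishes $y\in\mathcal{M}(D,Y)$. For part 2 the inclusion $\mathcal{O}(C,Y)\subseteq\mathcal{M}(D,Y)$ is already supplied by part 1, so only $\mathcal{M}(D,Y)\subseteq\mathcal{O}(C,Y)$ remains. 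I would argue by contraposition: suppose $m\in\mathcal{M}(D,Y)$ but $m\notin\mathcal{O}(C,Y)$, so some $x\in Y\backslash\{m\}$ satisfies $x-m\in C$, i.e. $x\in D(m)$. Maximality of $m$ then yields $m\in D(x)$, i.e. $m-x\in C$. Thus $x-m\in C\cap(-C)$, and the pointedness of $C$ forces $x-m=0$, contradicting $x\neq m$. Hence $m\in\mathcal{O}(C,Y)$, completing the equality.

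Neither part is computationally hard; the entire difficulty is in handling the hypotheses correctly. The subtle point I would flag is that a cone need not contain the zero vector, so in part 1 one cannot simply assert $y\in D(y)$ to discharge the case $s=y$; instead the conclusion $y\in D(s)$ must be extracted from the hypothesis $s\in D(y)$ itself, which is exactly what happens when $s=y$ collapses to $0\in C$. In part 2, pointedness is precisely the ingredient that converts the two-sided domination $x-m\in C$ and $m-x\in C$ into the equality $x=m$; dropping it would break the reverse inclusion, consistent with the theorem asserting equality only under that assumption. I therefore expect the write-up to be brief, with the care concentrated on these two uses of the standing hypotheses rather than on any estimate.
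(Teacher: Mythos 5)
Your proof is correct and follows essentially the same route as the paper's: part 1 rules out a dominating element distinct from $y$ via the definition of $\mathcal{O}(C,Y)$ (you do it by direct case analysis on $s=y$ versus $s\neq y$, the paper by contradiction, but the content is identical), and part 2 uses exactly the same two-sided domination $x-m\in C\cap(-C)$ plus pointedness. Your explicit handling of the case $s=y$ without assuming $0\in C$ is a careful touch, but it matches what the paper's contradiction argument does implicitly.
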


\begin{proof}
1. Suppose $y^{\ast}\in\mathcal{O}(C,Y)$. If $y^{\ast}\notin\mathcal{M}(D,Y)$
then there exists $y\in Y$ such that $y\in D(y^{\ast})$ and $y^{\ast}\notin
D(y)$: therefore $y^{\ast}\neq y\in Y$ and $y\in y^{\ast}+C$ getting a
contradiction with the assumption that $y^{\ast}\in\mathcal{O}(C,Y)$. So
$y^{\ast}\in\mathcal{M}(D,Y)$.

2. By virtue of part 1 of Theorem \ref{EQUO}, we are done if we show that
$\mathcal{M}(D,Y)\subseteq\mathcal{O}(C,Y)$. Its proof is as follows. Assume
that $m\in\mathcal{M}(D,Y)$, that $x\in Y$ and that $x\in m+C$: we conclude
showing that $x=m$. The definition of $D$ and the membership $x\in m+C$
implies $x\in D(m)$: the $D$-maximality of $m$ in turn entails that $m\in
D(x)$ and so that $m\in x+C$. The memberships $x\in m+C$ and $m\in x+C$ imply
$x=m$ by the pointedness of $C$.
\end{proof}

\begin{lemma}
\label{EKKA}Let $V$ be a RVS, $C$ be a cone in $V$ and $Y$ be a subset of $V$.
Then $\mathcal{O}(C,Y)=\mathcal{O}(C\cup\{0\},Y)$.
\end{lemma}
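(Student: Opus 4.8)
The plan is to reduce the set equality $\mathcal{O}(C,Y)=\mathcal{O}(C\cup\{0\},Y)$ to an elementwise equivalence, and then to settle that equivalence by a single observation about translates of the cone. Writing $C_{0}=C\cup\{0\}$, I note that both $\mathcal{O}(C,Y)$ and $\mathcal{O}(C_{0},Y)$ are subsets of $Y$, so it suffices to show that an arbitrary $y\in Y$ is a $C$-Pareto optimum of $Y$ if and only if it is a $C_{0}$-Pareto optimum of $Y$. By the definition of a $C$-Pareto optimum, this amounts to proving that the sets $Y\backslash\{y\}$ and $y+C$ are disjoint if and only if the sets $Y\backslash\{y\}$ and $y+C_{0}$ are disjoint.

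The key step is to unpack the translate $y+C_{0}$. Since $C_{0}=C\cup\{0\}$, basic properties of the Minkowski sum give $y+C_{0}=(y+C)\cup\{y\}$. The crucial remark is that the extra point $\{y\}$ introduced by adjoining the zero vector to $C$ lies outside $Y\backslash\{y\}$ by construction. Consequently, distributing the intersection over the union yields
\[
(Y\backslash\{y\})\cap(y+C_{0})=\bigl((Y\backslash\{y\})\cap(y+C)\bigr)\cup\bigl((Y\backslash\{y\})\cap\{y\}\bigr)=(Y\backslash\{y\})\cap(y+C)\text{,}
\]
because the second term in the union is empty. This identity immediately shows that $(Y\backslash\{y\})\cap(y+C)=\emptyset$ holds precisely when $(Y\backslash\{y\})\cap(y+C_{0})=\emptyset$ holds.

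From this equivalence I conclude that $y\in\mathcal{O}(C,Y)$ if and only if $y\in\mathcal{O}(C_{0},Y)$ for every $y\in Y$, which delivers the desired set equality. I do not anticipate any real obstacle here: the statement is essentially a bookkeeping observation, and the only point requiring a moment's care is recognizing that passing from $C$ to $C\cup\{0\}$ can alter the translate $y+C$ only by possibly inserting the single point $y$, which is precisely the point excluded from $Y\backslash\{y\}$ in the defining disjointness condition. No assumption on $C$ beyond its being a cone is needed, and in particular pointedness plays no role in this lemma.
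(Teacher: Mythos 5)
Your proof is correct and follows essentially the same route as the paper, which likewise reduces the claim to the observation that $Y\backslash\{y\}$ and $y+C$ are disjoint if and only if $Y\backslash\{y\}$ and $y+C\cup\{0\}$ are; you merely spell out the set-theoretic bookkeeping that the paper leaves implicit. The only detail the paper adds is a citation confirming that $C\cup\{0\}$ is itself a cone (so that the notation $\mathcal{O}(C\cup\{0\},Y)$ matches the stated hypotheses), which is immediate.
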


\begin{proof}
Put $C_{0}=C\cup\{0\}$. Part 1 of Lemma 1 in \cite{Cepa19} guarantees that
$C_{0}$ is a cone in $V$. Noted this, Lemma \ref{EKKA} is a consequence of the
observation that $Y\backslash\{y\}$ and $y+C$ are disjoint if and only if so
are $Y\backslash\{y\}$ and $y+C_{0}$.
\end{proof}

\begin{theorem}
\label{REZZINA}Let $V$ be a RVS, $C$ be a convex cone in $V$ and $Y$ be a
decomposably $C$-antichain-convex subset of $V$.

\begin{enumerate}
\item Then $\mathcal{O}(C,\operatorname*{co}(Y))\subseteq\mathcal{O}(C,Y)$.

\item Assume that $C$ is pointed. Then $\mathcal{O}(C,Y)=\mathcal{O}%
(C,\operatorname*{co}(Y))$.
\end{enumerate}
\end{theorem}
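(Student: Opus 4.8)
The plan is to reduce everything to the convex cone $K=C\cup\{0\}$ and then exploit Theorem \ref{MS}. Since $C$ is a convex cone, part 1 of Lemma \ref{ERRA} shows that $K$ is a convex cone; it contains the zero vector and, being already convex, satisfies $K=\operatorname*{co}(C\cup\{0\})$. By Lemma \ref{EKKA} we have $\mathcal{O}(C,Y)=\mathcal{O}(K,Y)$ and $\mathcal{O}(C,\operatorname*{co}(Y))=\mathcal{O}(K,\operatorname*{co}(Y))$, so it suffices to prove both parts with $K$ in place of $C$. The advantage is that Theorem \ref{MS}, applied to the decomposably $C$-antichain-convex set $Y$, then guarantees with exactly this $K$ that every element of $\operatorname*{co}(Y)$ is $K$-Pareto dominated by some element of $Y$.

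For part 1, I would take $y^{\ast}\in\mathcal{O}(K,\operatorname*{co}(Y))$ and first show $y^{\ast}\in Y$. By Theorem \ref{MS} there is $z\in Y$ with $z\in y^{\ast}+K$. Since $z\in Y\subseteq\operatorname*{co}(Y)$ and $y^{\ast}$ is a $K$-Pareto optimum of $\operatorname*{co}(Y)$, the sets $\operatorname*{co}(Y)\backslash\{y^{\ast}\}$ and $y^{\ast}+K$ are disjoint, forcing $z=y^{\ast}$; hence $y^{\ast}=z\in Y$. The disjointness of $Y\backslash\{y^{\ast}\}$ and $y^{\ast}+K$ then follows at once from the inclusion $Y\backslash\{y^{\ast}\}\subseteq\operatorname*{co}(Y)\backslash\{y^{\ast}\}$, so $y^{\ast}\in\mathcal{O}(K,Y)$. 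Note that pointedness plays no role here.

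For part 2 it remains, in view of part 1, to prove the reverse inclusion $\mathcal{O}(K,Y)\subseteq\mathcal{O}(K,\operatorname*{co}(Y))$. I would argue by contradiction: given $y^{\ast}\in\mathcal{O}(K,Y)$, suppose some $w\in\operatorname*{co}(Y)\backslash\{y^{\ast}\}$ satisfies $w\in y^{\ast}+K$; since $w\neq y^{\ast}$ this gives $w-y^{\ast}\in C$. Theorem \ref{MS} supplies $z\in Y$ with $z-w\in K$, whence $z-y^{\ast}=(z-w)+(w-y^{\ast})\in K+K\subseteq K$ by the convexity of the cone $K$ (Remark \ref{TENTO}); thus $z\in Y\cap(y^{\ast}+K)$ and the $K$-Pareto optimality of $y^{\ast}$ on $Y$ forces $z=y^{\ast}$. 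Consequently $y^{\ast}-w=z-w\in K=C\cup\{0\}$, while $w-y^{\ast}\in C$. If $y^{\ast}-w=0$ then $w=y^{\ast}$; otherwise $y^{\ast}-w\in C$, so $w-y^{\ast}\in C\cap(-C)\subseteq\{0\}$ by pointedness and again $w=y^{\ast}$. Either way we contradict $w\neq y^{\ast}$. The main obstacle is exactly this final chain: one must pull the dominating element $z$ back into $Y$, use the optimality of $y^{\ast}$ there to collapse $z$ onto $y^{\ast}$, and only then invoke pointedness to collapse $w$ as well—each passage to $K$ being needed so that the additions $K+K\subseteq K$ and the membership $y^{\ast}\in y^{\ast}+K$ are available.
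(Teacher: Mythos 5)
Your proof is correct and follows essentially the same route as the paper's: reduce to the convex cone $C\cup\{0\}$ via Lemma \ref{EKKA} and part 1 of Lemma \ref{ERRA}, then invoke part 1 of Theorem \ref{MS} to produce the dominating element $z\in Y$. The only (immaterial) difference is in part 2, where the paper derives its contradiction directly from $z\in y+C_{0}$ with $z\neq y$ (using pointedness to see that $c+C_{0}\subseteq C\backslash\{0\}$), whereas you first collapse $z$ onto $y^{\ast}$ and only then apply pointedness to collapse $w$.
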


\begin{proof}
Put $C_{0}=C\cup\{0\}$. The set $C_{0}$ is a convex cone by part 1 of Lemma
\ref{ERRA}. So $C_{0}=\operatorname*{co}(C\cup\{0\})$.

1. By virtue of Lemma \ref{EKKA}, the inclusion $\mathcal{O}%
(C,\operatorname*{co}(Y))\subseteq\mathcal{O}(C,Y)$ is equivalent to the
inclusion $\mathcal{O}(C_{0},\operatorname*{co}(Y))\subseteq\mathcal{O}%
(C_{0},Y)$. To conclude the proof we show the validity of the last inclusion,
as follows. Suppose $y\in\mathcal{O}(C_{0},\operatorname*{co}(Y))$. We are
done if we show that $y\in Y$. As $y\in\operatorname*{co}(Y)$, part 1 of
Theorem \ref{MS} implies the existence of $z\in Y$ such that $z\in y+C_{0}$.
As $y\in\mathcal{O}(C_{0},\operatorname*{co}(Y))$, we must have that $y=z$. So
$y\in Y$.

2. Part 1 of Theorem \ref{REZZINA} ensures that $\mathcal{O}%
(C,\operatorname*{co}(Y))\subseteq\mathcal{O}(C,Y)$. So we are done if we
prove that $\mathcal{O}(C,Y)\subseteq\mathcal{O}(C,\operatorname*{co}(Y))$. By
virtue of Lemma \ref{EKKA}, we are done if we prove that%
\[
\mathcal{O}(C_{0},Y)\subseteq\mathcal{O}(C_{0},\operatorname*{co}(Y))\text{.}%
\]
The proof of the last inclusion is as follows. Suppose $y\in\mathcal{O}%
(C_{0},Y)$ and, by way of contradiction, suppose $y\notin\mathcal{O}%
(C_{0},\operatorname*{co}(Y))$: then there exists $c\neq0$ in $C_{0}$ such
that $y+c\in\operatorname*{co}(Y)$. Part 1 of Theorem \ref{MS} implies the
existence of $z\in Y $ such that $z\in y+c+C_{0}$. Note that the convex cone
$C_{0}$ is pointed and hence that $c+C_{0}\subseteq C_{0}\backslash
\{0\}=C\backslash\{0\}$ as $c\neq0$. The membership $z\in y+c+C_{0}$ and the
inclusion $c+C_{0}\subseteq C\backslash\{0\}$ imply $z\in y+C_{0}$ and $z\neq
y$: as $z\in Y$, we have obtained a contradiction with the assumption that
$y\in\mathcal{O}(C_{0},Y)$.
\end{proof}

\section{\label{TRE}On disjoint convex hulls}

Lemma \ref{MT1} guarantees that the convex hulls of disjoint $C$%
-antichain-convex sets are disjoint whenever one of the $C$-antichain-convex
sets is $C$-upward (or, dually, $C$-downward). Theorem \ref{TEOCON}
generalizes to decomposably $C$-antichain-convex sets.

\begin{lemma}
\label{MT1}Let $V$ be a RVS and $C$ be a convex cone in $V$ containing the
zero vector. Suppose $X$ and $Y$ are disjoint $C$-antichain-convex subsets of
$V$.

\begin{enumerate}
\item Suppose $X$ is $C$-upward. Then $\operatorname*{co}(X)$ and
$\operatorname*{co}(Y)$ are disjoint.

\item Suppose $X$ is $C$-downward. Then $\operatorname*{co}(X)$ and
$\operatorname*{co}(Y)$ are disjoint.
\end{enumerate}
\end{lemma}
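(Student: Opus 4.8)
The plan is to combine two earlier results in tandem: Theorem \ref{STANGHI}, which will collapse the convex hull of $X$ onto $X$ itself, and Lemma \ref{HM}, which produces a $C$-Pareto dominating element of $Y$ lying in $Y$. The guiding idea is that if $\operatorname*{co}(X)$ and $\operatorname*{co}(Y)$ met, one could manufacture a common point of the original sets $X$ and $Y$, contradicting their disjointness.

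First I would handle part 1, where $X$ is $C$-upward. Since $X$ is $C$-antichain-convex, it is in particular (trivially) decomposably $C$-antichain-convex, so part 1 of Theorem \ref{STANGHI} applies and guarantees that $X$ is convex; hence $\operatorname*{co}(X)=X$. It therefore suffices to prove that $X$ and $\operatorname*{co}(Y)$ are disjoint. Arguing by contradiction, I would assume there is a point $w\in X\cap\operatorname*{co}(Y)$.

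The crux is then to exhibit a point of $X\cap Y$. Because $w\in\operatorname*{co}(Y)$ and $C$ is a convex cone containing the zero vector, part 1 of Lemma \ref{HM}, applied to the $C$-antichain-convex set $Y$, yields some $z\in Y$ that $C$-Pareto dominates $w$, i.e. $z\in w+C$. Now I would invoke that $X$ is $C$-upward: from $w\in X$ one gets $w+C\subseteq X+C\subseteq X$, so the dominating element $z$ is reabsorbed into $X$, giving $z\in X$. Thus $z\in X\cap Y$, contradicting the hypothesis that $X$ and $Y$ are disjoint. This settles part 1.

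Finally, part 2 would follow from part 1 by the $C$/$-C$ duality recorded in Remark \ref{K2}: if $X$ is $C$-downward then $X$ is $-C$-upward, while $X$ and $Y$ remain disjoint and $-C$-antichain-convex and $-C$ is again a convex cone containing the origin; applying part 1 with $-C$ in place of $C$ (and noting that $\operatorname*{co}$ does not depend on the cone) gives the desired disjointness. I do not expect a genuine obstacle, as the two ingredients mesh directly; the one point deserving care is the asymmetry of the hypotheses—only $X$, not $Y$, is assumed $C$-upward—which is precisely what makes the argument go through, since upwardness is used only to pull the dominating element back into $X$, whereas nothing beyond $C$-antichain-convexity is required of $Y$.
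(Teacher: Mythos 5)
Your proposal is correct and follows essentially the same route as the paper: establish that the $C$-upward, $C$-antichain-convex set $X$ is convex (the paper cites results from \cite{Cepa19} directly, you go through Theorem \ref{STANGHI}, which amounts to the same thing), then use part 1 of Lemma \ref{HM} to pull a point of $X\cap\operatorname*{co}(Y)$ up into $Y$ along the cone and absorb it back into $X$ by upwardness, contradicting disjointness; part 2 by Remark \ref{K2}. No gaps.
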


\begin{proof}
1. Proposition 2 in \cite{Cepa19} and part 1 of Proposition 5 in \cite{Cepa19}
guarantee that $X$ is convex. So $X=\operatorname*{co}(X)$ and we are done if
we prove that $X\cap\operatorname*{co}(Y)=\emptyset$. By way of contradiction,
suppose $X\cap\operatorname*{co}(Y)\neq\emptyset$ and pick $y\in
X\cap\operatorname*{co}(Y)$. As $y\in X$ and $X$ is $C$-upward, part 1 of
Lemma 5 in \cite{Cepa19} implies that
\begin{equation}
y+C\subseteq X\text{.}\label{Gatto0}%
\end{equation}
As $y\in\operatorname*{co}(Y)$, part 1 of Lemma \ref{HM} guarantees the
existence of
\begin{equation}
x\in Y\label{Gatto1}%
\end{equation}
such that $x\in y+C$. The previous membership and (\ref{Gatto0}) imply $x\in
X$: a contradiction with (\ref{Gatto1}) and the assumption that $X$ and $Y$
are disjoint.

2. A consequence of part 1 of Lemma \ref{MT1} and Remark \ref{K2}.
\end{proof}

\begin{theorem}
\label{TEOCON}Let $V$ be a RVS and $C$ be a cone in $V$. Assume that $X$ and
$Y$ are decomposably $C$-antichain-convex subsets of $V$.

\begin{enumerate}
\item Suppose $X$ is $C$-upward. Then $\operatorname*{co}(X)$ and
$\operatorname*{co}(Y)$ are disjoint.

\item Suppose $X$ is $C$-downward. Then $\operatorname*{co}(X)$ and
$\operatorname*{co}(Y)$ are disjoint.
\end{enumerate}
\end{theorem}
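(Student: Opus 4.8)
The plan is to mimic the proof of Lemma \ref{MT1}, replacing the appeal to Lemma \ref{HM} by its decomposable analogue, Theorem \ref{MS}; as in the abstract and in Lemma \ref{MT1}, I read the hypotheses of Theorem \ref{TEOCON} as also requiring $X$ and $Y$ to be disjoint (otherwise the conclusion fails for $X=Y$ a singleton). I would prove part 1 directly and then deduce part 2 from it through the duality recorded in Remark \ref{K2}.

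For part 1, first I would observe that $X$, being decomposably $C$-antichain-convex and $C$-upward, is convex by part 1 of Theorem \ref{STANGHI}; hence $\operatorname*{co}(X)=X$ and it suffices to prove $X\cap\operatorname*{co}(Y)=\emptyset$. Set $K=\operatorname*{co}(C\cup\{0\})$, the convex cone containing the zero vector that appears in Theorem \ref{MS}. Arguing by contradiction, I would suppose there is some $y\in X\cap\operatorname*{co}(Y)$. Since $y\in\operatorname*{co}(Y)$ and $Y$ is decomposably $C$-antichain-convex, part 1 of Theorem \ref{MS} yields an element $z\in Y$ that $K$-Pareto dominates $y$, that is, $z\in y+K$. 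If I can show $y+K\subseteq X$, then $z\in X\cap Y$, contradicting disjointness and finishing the proof.

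The crux is therefore to upgrade the upwardness of $X$ from $C$ to $K$, so that the $K$-domination produced by Theorem \ref{MS} lands back inside $X$. Here I would use that $X$ is $C$-upward exactly when it is $(C\cup\{0\})$-upward (adjoining the zero vector is harmless, since $y+0=y\in X$), and that, by part 1 of Theorem \ref{T:CH} applied to the cone $C\cup\{0\}$, the set $X$ is $(C\cup\{0\})$-upward if and only if it is $\operatorname*{co}(C\cup\{0\})=K$-upward. Thus $X+K\subseteq X$, and since $y\in X$ this gives $y+K\subseteq X$, the missing inclusion. This is the main obstacle: one must notice that Theorem \ref{MS} only guarantees domination with respect to the enlarged cone $K$, not $C$ itself, and then check that $C$-upwardness nonetheless transfers to $K$.

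Finally, for part 2 I would invoke Remark \ref{K2}: if $X$ is $C$-downward then $X$ is $(-C)$-upward, and both $X$ and $Y$ are decomposably $(-C)$-antichain-convex and still disjoint. Applying part 1 with the cone $-C$ in place of $C$ then gives that $\operatorname*{co}(X)$ and $\operatorname*{co}(Y)$ are disjoint, completing the argument.
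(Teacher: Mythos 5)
Your argument is correct, and you were right to read the (evidently omitted) hypothesis that $X$ and $Y$ are disjoint into the statement; the paper's own proof uses it as well. However, your route differs from the paper's. You prove the theorem directly by generalizing the argument of Lemma \ref{MT1}: convexity of $X$ via Theorem \ref{STANGHI}, then Theorem \ref{MS} to produce a $K$-dominating point $z\in Y$ of any $y\in X\cap\operatorname{co}(Y)$, and the upgrade of $C$-upwardness of $X$ to $K$-upwardness (via Theorem \ref{T:CH} applied to $C\cup\{0\}$) to conclude $z\in y+K\subseteq X$, a contradiction. The paper instead reduces to Lemma \ref{MT1} itself: it writes $Y=Y_{1}+\ldots+Y_{n}$, forms $Z=X+(-Y_{1})+\ldots+(-Y_{n-1})$, notes $Z\cap Y_{n}=\emptyset$ by the Minkowski-sum identity $D\cap(E+F)=\emptyset\Leftrightarrow(D-E)\cap F=\emptyset$, checks that $Z$ is convex and $K$-upward and that $Y_{n}$ is $K$-antichain-convex, applies Lemma \ref{MT1} to the pair $(Z,Y_{n})$, and then unwinds the convex hulls. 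The reduction is needed there because Lemma \ref{MT1} only covers $C$-antichain-convex (not decomposably $C$-antichain-convex) sets, so the $Y_{i}$ must be peeled off one layer at a time; your proof sidesteps this entirely because Theorem \ref{MS} already handles the decomposable case, which makes your argument shorter and arguably cleaner. You correctly identified the one genuine subtlety—that Theorem \ref{MS} only yields domination with respect to the enlarged cone $K$, so $C$-upwardness must be transferred to $K$—and your handling of it (zero vector harmless, then Theorem \ref{T:CH}) matches the mechanism the paper uses inside its own reduction. Part 2 via Remark \ref{K2} is exactly the paper's deduction.
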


\begin{proof}
1. Part 1 of Theorem \ref{STANGHI} ensures that $X$ is convex. The assumption
that $Y$ is decomposably $C$-antichain-convex implies the existence of
$C$-antichain-convex subsets $Y_{1}$,$\ldots$,$Y_{n}$ of $V$ such that
$Y=Y_{1}+\ldots+Y_{n}$. Put
\[
Z=X+(-Y_{1})+\ldots+(-Y_{n-1})
\]
and note that $Z\cap Y_{n}=\emptyset$.\footnote{\label{CINIC copy(2)}Letting
$D$, $E$ and $F$ be subsets of a RVS, to prove the previous equality note that
$D\cap(E+F)=\emptyset$ if and only if $(D-E)\cap F=\emptyset$.} The sets
$-Y_{1}$,$\ldots$,$-Y_{n-1}$ are $C$-antichain-convex by part 6 of Lemma 3 in
\cite{Cepa19}. So $Z$ is convex and $C$-upward by part 1 of Corollary 1 in
\cite{Cepa19} and hence $Z=\operatorname*{co}(Z)$. Put $C_{0}=C\cup\{0\}$ and
$K=\operatorname*{co}(C_{0})$. Part 6 of Proposition 3 in \cite{Cepa19} and
the inclusion $C\subseteq K$ imply the $K$-antichain-convexity of $Y_{n}$.
Part 1 of Lemma 4 in \cite{Cepa19} and part 1 of Theorem \ref{T:CH} guarantee
that $Z$ is $K$-upward. So part 1 of Lemma \ref{MT1} ensures that
$\operatorname*{co}(Z)\cap\operatorname*{co}(Y_{n})=\emptyset$ and the
equality $Z=\operatorname*{co}(Z)$ in turn implies%
\begin{equation}
Z\cap\operatorname*{co}(Y_{n})=\emptyset\text{.}\label{UKAA}%
\end{equation}
The definition of $Z$ and the fact that the convex hull of the Minkowski sum
of $n$ sets equals the Minkowski sum of their convex hulls entail that
\begin{equation}
Z=\operatorname*{co}(Z)=\operatorname*{co}(X)-(\operatorname*{co}%
(Y_{1})+\ldots+\operatorname*{co}(Y_{n-1}))\text{.}\label{UMII}%
\end{equation}
The equalities in (\ref{UKAA}) and (\ref{UMII}) yield%
\[
(\operatorname*{co}(X)+(-\operatorname*{co}(Y_{1}))+\ldots
+(-\operatorname*{co}(Y_{n-1})))\cap\operatorname*{co}(Y_{n})=\emptyset
\]
and so $\operatorname*{co}(X)\cap(\operatorname*{co}(Y_{1})+\ldots
+\operatorname*{co}(Y_{n}))=\emptyset$. As the Minkowski sum of the convex
hulls of $n$ sets equals the convex hull of their Minkowski sum, the previous
equality implies $\operatorname*{co}(X)\cap\operatorname*{co}(Y)=\emptyset$. \

2. A consequence of part 1 of Theorem \ref{TEOCON} and Remark \ref{K2}.
\end{proof}

\ \ \ \ \

Corollary \ref{Coro3} is a consequence of Theorem \ref{TEOCON}. The motivation
for explicitly considering $X$ and $Y$ as the Minkowski sums of sets is due to
the possible applications of this type of results. We shall return on this
point in Sect. \ref{SEP}.

\begin{corollary}
\label{Coro3}Let $V$ be a RVS and $C$ be a cone in $V$. Assume that $X_{1}%
$,\ldots, $X_{m}$, $Y_{1}$,\ldots, $Y_{n}$ are $C$-antichain-convex subsets of
$V$. Put%
\[
X=X_{1}+\ldots+X_{m}\text{ \ and }Y=Y_{1}+\ldots+Y_{n}%
\]
and assume that $X$ and $Y$ are disjoint.

\begin{enumerate}
\item Suppose $X_{1}$ is $C$-upward. Then $\operatorname*{co}(X)$ and
$\operatorname*{co}(Y)$ are disjoint.

\item Suppose $X_{1}$ is $C$-downward. Then $\operatorname*{co}(X)$ and
$\operatorname*{co}(Y)$ are disjoint.
\end{enumerate}
\end{corollary}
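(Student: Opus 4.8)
The plan is to deduce the Corollary from Theorem \ref{TEOCON} by checking that its hypotheses are met by $X$ and $Y$. By construction $X=X_{1}+\ldots+X_{m}$ and $Y=Y_{1}+\ldots+Y_{n}$ are Minkowski sums of $C$-antichain-convex sets, so both $X$ and $Y$ are decomposably $C$-antichain-convex; moreover $X$ and $Y$ are disjoint by assumption. Hence the only thing that separates us from an immediate application of Theorem \ref{TEOCON} is the verification that the whole sum $X$ (and not merely the single summand $X_{1}$) is $C$-upward.

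First I would record the key step: if $X_{1}$ is $C$-upward then $X$ is $C$-upward. To see this, note that the $C$-upwardness of $X_{1}$ gives $X_{1}+C\subseteq X_{1}$ (this is the content of part 1 of Lemma 5 in \cite{Cepa19}, which ensures $x_{1}+C\subseteq X_{1}$ for every $x_{1}\in X_{1}$, as already used in the proof of Lemma \ref{MT1}). Using the associativity and commutativity of the Minkowski sum I would then write
\[
X+C=(X_{1}+C)+(X_{2}+\ldots+X_{m})\subseteq X_{1}+(X_{2}+\ldots+X_{m})=X\text{,}
\]
and the inclusion $X+C\subseteq X$ establishes that $X$ is $C$-upward. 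I expect this to be the main (though short) obstacle, since the rest of the argument merely invokes previously established results; the only care needed here is to manipulate the Minkowski sums correctly and to rely on the operative characterization of $C$-upwardness rather than on its bare definition.

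With this in hand, part 1 follows at once: $X$ and $Y$ are disjoint decomposably $C$-antichain-convex sets and $X$ is $C$-upward, so part 1 of Theorem \ref{TEOCON} yields that $\operatorname*{co}(X)$ and $\operatorname*{co}(Y)$ are disjoint. For part 2 I would argue by duality, exactly as the paper does elsewhere. If $X_{1}$ is $C$-downward then, by Remark \ref{K2}, $X_{1}$ is $-C$-upward; by the same remark each $X_{i}$ and each $Y_{j}$ is $-C$-antichain-convex because it is $C$-antichain-convex, while $X$ and $Y$ remain disjoint. Applying part 1 of the Corollary with the cone $-C$ in place of $C$ then gives that $\operatorname*{co}(X)$ and $\operatorname*{co}(Y)$ are disjoint. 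Thus part 2 is a consequence of part 1 of Corollary \ref{Coro3} and Remark \ref{K2}.
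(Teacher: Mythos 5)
Your proof is correct and takes essentially the same route as the paper's: both reduce the Corollary to Theorem \ref{TEOCON} by noting that $X$ and $Y$ are decomposably $C$-antichain-convex and that $X$ inherits $C$-upwardness (resp.\ $C$-downwardness) from $X_{1}$. The only difference is presentational: you verify $X+C\subseteq X$ by a direct Minkowski-sum computation, whereas the paper obtains the needed property of $X$ by citing Corollary 1 of \cite{Cepa19}.
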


\begin{proof}
A consequence of Corollary 1 in \cite{Cepa19} and Theorem \ref{TEOCON}.
\end{proof}

\section{\label{SEP}Separation}

Some of the previous results are now applied to obtain separation theorems
that dispense with the assumption of convexity (at least for one of the two
separated sets). All our applications hinge on---and extend---known theorems
of the literature about the separation of convex sets. After recalling some
definitions in Sect. \ref{PRESEP}, we subsequently present new results on the
separation of two decomposably $C$-antichain-convex sets. Results about the
separation of not necessarily convex sets that are Minkowski sums of other
sets is of interest in economics as they allow to extend the classical Second
Welfare Theorem(s) for convex economies to economies with non-convexities (we
refer to \cite{Cepa19} for a longer discussion and for a concrete application
of a version of a separation theorem similar---albeit nonequivalent---to that
presented in Sect. \ref{NTI}). The result in Sect. \ref{NTI} posits the
nonemptiness of the topological interior of one of the separated sets: in
Sect. \ref{CAC} and \ref{QQRRII} the applications dispense with such an
assumption. In Sect. \ref{QQRRII} we build on a separation theorem due to
\cite{Vanc19} which employs the quasi-relative interiority notion. Other
results and discussions on the separation of convex sets that use the
quasi-relative interiority notion can be found, for instance, in
\cite{Camm05,Dani07,BotC08,Zali15}.

\subsection{\label{PRESEP}Preliminary definitions}

Let $V$ be a RVS endowed with some topology, let $X$, $Y$ and $Z$ be subsets
of $V$ and let $f\in V^{\ast}$. We say that: $f$ \textbf{separates} $X$ and
$Y$ iff $\sup f[X]\leq\inf f[Y]$; $f$ \textbf{properly separates} $X$ and $Y $
iff $f$ separates $X$ and $Y$ and $\inf f[X]<\sup f[Y]$; $f$ \textbf{strictly
separates} $X$ and $Y$ iff $\sup f[X]<\inf f[Y]$.\footnote{Consequently: $f$
separates $X$ and $Y$ iff $f(x)\leq g(y)$ for all $(x,y)\in X\times Y$; $f$
properly separates $X$ and $Y$ iff $f(x)\leq g(y)$ for all $(x,y)\in X\times
Y$ and there exists $(x_{0},y_{0})\in X\times Y$ such that $f(x_{0})<g(y_{0}%
)$; $f$ strictly separates $X$ and $Y$ iff $f(x)<g(y)$ for all $(x,y)\in
X\times Y$.} Moreover, we say that: $X$ and $Y$ are \textbf{separated} iff
there exists $f\in V^{\ast}\backslash\{0\}$ that separates $X$ and $Y$; $X$
and $Y$ are \textbf{properly separated} iff there exists $f\in V^{\ast}$ that
properly separates $X$ and $Y$; $X$ and $Y $ are \textbf{strictly separated}
iff there exists $f\in V^{\ast}$ that strictly separates $X$ and $Y$. Assuming
the convexity of $Z$---and following the notation in \cite{Mord18} and
\cite{Vanc19}---we say that:

\begin{itemize}
\item the \textbf{relative interior} of $Z$ is the set
\[
\operatorname*{ri}(Z)=\{z\in Z:\exists\text{ a neighborhood }\mathrm{U}\text{
of }z\text{ such that }\mathrm{U}\cap\operatorname*{cl}(\operatorname*{aff}%
(Z))\subseteq Z\}\text{;}%
\]

\item the \textbf{intrinsic relative interior} of $Z$ is the set
\[
\operatorname*{iri}(Z)=\{z\in Z:\operatorname*{cone}(Z-z)\text{ is a subspace
of }V\}\text{;}%
\]

\item the \textbf{quasi-relative interior} of $Z$ is the set
\[
\operatorname*{qri}(Z)=\{z\in Z:\operatorname*{cl}(\operatorname*{cone}%
(Z-z))\text{ is a subspace of }V\}\text{;}%
\]

\item the set $Z$ is \textbf{quasi-regular} iff $\operatorname*{iri}%
(Z)=\operatorname*{qri}(Z)$.
\end{itemize}

\noindent\noindent Finally, we recall the following known facts:

\begin{itemize}
\item $\operatorname*{int}(Z)\subseteq\operatorname*{ri}(Z)\subseteq
\operatorname*{iri}(Z)\subseteq\operatorname*{qri}(Z)$;

\item $\operatorname*{int}(Z)=\operatorname*{ri}(Z)=\operatorname*{iri}%
(Z)=\operatorname*{qri}(Z)$ whenever $\operatorname*{int}(Z)\neq\emptyset$;

\item $\operatorname*{int}(Z)\subseteq\operatorname*{ri}%
(Z)=\operatorname*{iri}(Z)=\operatorname*{qri}(Z)\neq\emptyset$ whenever $V$
is finite-dimensional.
\end{itemize}

\noindent In the sequel we shall make use of Corollary \ref{c:co} that follows
as a consequence of Theorem \ref{t:cepaquart} below. Theorem \ref{t:cepaquart}
is in fact a restatement of Theorem 5 in \cite{Cepa19} and hence we omit its
proof.\footnote{Just note that in the mentioned article the definition of a
separating functional is reversed with respect to that used here (and that no
topological assumption is needed).}

\begin{theorem}
\label{t:cepaquart}Let $V$ be a TRVS and $C$ be a cone in $V$. Assume that $X
$ and $Y$ are nonempty subsets of $V$ and suppose $f\in V^{\ast}%
\backslash\{0\}$ separates $X$ and $Y$.

\begin{enumerate}
\item If $X$ is $C$-upward then $f$ is nonpositive on $C$.

\item If $X$ is $C$-downward then $f$ is nonpositive on $C$.\ \ \ \ \ \ \
\end{enumerate}
\end{theorem}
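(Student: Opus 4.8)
The plan is to convert the single scalar inequality coming from separation into a sign restriction on $f$ over the cone, by driving a point of $X$ to infinity along a ray of $C$. First I would record that, $X$ and $Y$ being nonempty, I may fix $x_{0}\in X$ and $y_{0}\in Y$; separation ($\sup f[X]\le\inf f[Y]$) then yields the uniform finite bound $f(x)\le f(y_{0})$ for every $x\in X$, and it is this bound that the ray argument will contradict.

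For part 1 I would assume $X$ is $C$-upward (so that $X+C\subseteq X$), fix an arbitrary $c\in C$, and use the cone property $\lambda c\in C$ for $\lambda\in\mathbb{R}_{++}$ together with upwardness to place the whole ray $x_{0}+\lambda c$ inside $X$. Linearity of $f$ then gives
\[
f(x_{0})+\lambda f(c)=f(x_{0}+\lambda c)\le f(y_{0})\qquad(\lambda\in\mathbb{R}_{++}),
\]
and if $f(c)>0$ the left-hand side would be unbounded above as $\lambda\to+\infty$, contradicting the fixed bound $f(y_{0})$. Hence $f(c)\le0$, and since $c$ is arbitrary, $f$ is nonpositive on $C$.

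For part 2 I would avoid redoing the computation and instead invoke Remark \ref{K2}, by which $X$ is $C$-downward exactly when $X$ is $(-C)$-upward, $-C$ being again a cone. Applying part 1 to the cone $-C$ then delivers the sign restriction on the downward side: $f$ is nonpositive on $-C$ (equivalently, nonnegative on $C$), in perfect analogy with part 1. The same conclusion falls out of the direct ray argument, now along the admissible ray $x_{0}-\lambda c\in X$, which gives
\[
f(x_{0})-\lambda f(c)\le f(y_{0})\qquad(\lambda\in\mathbb{R}_{++}),
\]
so that $f(c)<0$ would again make the left-hand side blow up and force $f(-c)=-f(c)\le0$.

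The step I expect to be the main obstacle is precisely this sign bookkeeping in passing from the upward to the downward hypothesis: the admissible ray flips from $x_{0}+\lambda c$ to $x_{0}-\lambda c$, so the quantity that must stay bounded becomes $-\lambda f(c)$ rather than $\lambda f(c)$, and it is this switch that transports the bound from $C$ to $-C$. Everything else is routine—only the cone property of $C$ and the finiteness of the separation bound (guaranteed by the nonemptiness of both $X$ and $Y$) are used, and no structure on $V$ beyond $f\in V^{\ast}$ is required.
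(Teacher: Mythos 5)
Your part 1 is correct, and it is the expected argument: the paper itself gives no proof of this theorem (it defers to Theorem 5 of \cite{Cepa19}, with a footnote warning that the separation convention there is reversed), and the ray argument you give---$x_{0}+\lambda c\in X$ by upwardness and the cone property, hence $f(x_{0})+\lambda f(c)\le f(y_{0})$ for all $\lambda\in\mathbb{R}_{++}$, forcing $f(c)\le 0$---is the standard route. You are also right that $f\neq 0$ and the topology play no role and that nonemptiness of both sets is what makes the bound finite; this matches the paper's own footnote that no topological assumption is needed.

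The genuine problem is part 2, and it sits exactly in the sign bookkeeping you yourself flagged as the main obstacle. What your argument proves---both via Remark \ref{K2} with the cone $-C$ and via the direct ray $x_{0}-\lambda c\in X$---is that $f$ is nonpositive on $-C$, i.e.\ \emph{nonnegative} on $C$. That is not the printed assertion, which claims $f$ is nonpositive on $C$; you wrote \textquotedblleft in perfect analogy with part 1\textquotedblright\ and declared the statement proved without registering that your conclusion is the opposite of the stated one. Indeed, part 2 as printed is false under this paper's convention $\sup f[X]\le\inf f[Y]$: take $V=\mathbb{R}$, $C=\mathbb{R}_{++}$, $X=\left(-\infty,0\right]$ (which is $C$-downward), $Y=\left[1,+\infty\right)$ and $f=\mathrm{id}$; then $f$ separates $X$ and $Y$ but $f>0$ on $C$. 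The statement is repaired either by changing the conclusion to \textquotedblleft $f$ is nonnegative on $C$\textquotedblright\ (which is what you actually established) or by placing the downward hypothesis on $Y$ (if $Y$ is $C$-downward, then $y_{0}-\lambda c\in Y$ for all $\lambda>0$, so $f(y_{0})-\lambda f(c)\ge f(x_{0})$ and hence $f(c)\le 0$); the printed sign appears to be a relic of the reversed convention of \cite{Cepa19} that was not flipped in the restatement, and the same slip propagates to part 2 of Corollary \ref{c:co} (harmlessly, since the paper's later uses always reduce the downward case to the upward case with cone $-C$). So your computation is sound, but a blind proof must either prove the statement as given or report that it cannot be proved: the correct deliverable here was the counterexample together with the corrected sign, not a claimed proof of the theorem as stated.
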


\begin{corollary}
\label{c:co} Let $V$ be a TRVS and $C$ be a cone in $V$. Assume that $X$ and
$Y$ are nonempty subsets of $V$ and suppose $f\in V^{\ast}\backslash\{0\}$
separates $X$ and $Y$.

\begin{enumerate}
\item If $X$ is $C$-upward then $f$ is nonpositive on $\operatorname*{co}%
(C\cup\{0\})$.

\item If $X$ is $C$-downward then $f$ is nonpositive on $\operatorname*{co}%
(C\cup\{0\})$.
\end{enumerate}
\end{corollary}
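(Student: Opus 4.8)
The plan is to deduce Corollary \ref{c:co} directly from Theorem \ref{t:cepaquart}, using the fact that a linear functional which is nonpositive on a cone $C$ is automatically nonpositive on $\operatorname*{co}(C\cup\{0\})$. The two statements differ only in that the conclusion has been strengthened from ``$f$ is nonpositive on $C$'' to ``$f$ is nonpositive on $\operatorname*{co}(C\cup\{0\})$'', so the entire content of the corollary reduces to this elementary closure property of the set on which a linear functional is nonpositive.

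For part 1, I would argue as follows. Assume $X$ is $C$-upward and $f\in V^{\ast}\backslash\{0\}$ separates $X$ and $Y$. Part 1 of Theorem \ref{t:cepaquart} immediately gives that $f$ is nonpositive on $C$, i.e., $f(c)\leq0$ for all $c\in C$. It remains to show $f(w)\leq0$ for every $w\in\operatorname*{co}(C\cup\{0\})$. First note that $f(0)=0\leq0$ by linearity, so $f$ is nonpositive on $C\cup\{0\}$. Now take any $w\in\operatorname*{co}(C\cup\{0\})$: by definition of the convex hull, $w=\sum_{i=1}^{k}\lambda_{i}w_{i}$ for some $w_{1},\ldots,w_{k}\in C\cup\{0\}$ and nonnegative weights $\lambda_{1},\ldots,\lambda_{k}$ summing to $1$. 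Then linearity of $f$ gives $f(w)=\sum_{i=1}^{k}\lambda_{i}f(w_{i})$, and since each $f(w_{i})\leq0$ and each $\lambda_{i}\geq0$, we conclude $f(w)\leq0$. Thus $f$ is nonpositive on $\operatorname*{co}(C\cup\{0\})$.

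For part 2 I would observe that the identical argument applies verbatim once part 2 of Theorem \ref{t:cepaquart} is invoked in place of part 1, since that part also yields that $f$ is nonpositive on $C$ under the hypothesis that $X$ is $C$-downward; alternatively one can simply note that part 2 is a consequence of part 1 of the present corollary together with Remark \ref{K2} and the identity $\operatorname*{co}(-C\cup\{0\})=-\operatorname*{co}(C\cup\{0\})$, exactly as the authors handle the ``downward'' cases elsewhere. I do not anticipate a genuine obstacle here: the only thing to be careful about is the bookkeeping in passing from nonpositivity on $C$ to nonpositivity on $C\cup\{0\}$ (which requires the trivial observation $f(0)=0$, needed because the paper's convention allows a cone $C$ not to contain the zero vector) and then to the convex hull, both of which follow from linearity and the order-preserving nature of nonnegative combinations. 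The step most worth stating explicitly, rather than the main difficulty, is simply that $f$ nonpositive on a set $A$ implies $f$ nonpositive on $\operatorname*{co}(A)$.
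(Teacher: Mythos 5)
Your proposal is correct and follows essentially the same route as the paper: the authors likewise write an arbitrary $k\in\operatorname*{co}(C\cup\{0\})$ as a convex combination of elements of $C\cup\{0\}$, apply the relevant part of Theorem \ref{t:cepaquart} to get $f(c_{i})\leq0$, and conclude by linearity. Your explicit remark that $f(0)=0$ (needed since the paper's cones need not contain the zero vector) is a small point the paper glosses over, but otherwise the arguments coincide.
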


\begin{proof}
Suppose $X$ is $C$-upward (resp. $C$-downward). Pick an arbitrary
$k\in\operatorname*{co}(C\cup\{0\})$. Then there exist $n$ elements
$c_{1},\ldots,c_{n}$ in $C\cup\{0\}$ and $\alpha$ in $\mathbb{R}_{+}^{n}$ such
that $\alpha_{1}+\ldots+\alpha_{n}=1$ and $k=\alpha_{1}c_{1}+\ldots+\alpha
_{n}c_{n}$. The linearity of $f$ then implies that $f(k)=\alpha_{1}%
f(c_{1})+\ldots+\alpha_{n}f(c_{n})$. Part 1 (resp. Part 2) of Theorem
\ref{t:cepaquart} ensures that $f(c_{i})\leq0$ for all $i=1,\ldots,n$ and so
$f(k)\leq0$.
\end{proof}

\subsection{\label{NTI}Nonempty topological interior}

\begin{theorem}
\label{t:int}Let $V$ be a TRVS and $C$ be a cone in $V$. Assume that
$X_{1},\ldots,X_{m}$, $Y_{1},\ldots,Y_{n}$ are nonempty $C$-antichain-convex
subsets of $V$. Put
\[
X=X_{1}+\ldots+X_{m}\text{\ and\ }Y=Y_{1}+\ldots+Y_{n}\text{.}%
\]
Suppose $\operatorname*{int}(X)\neq0$ and $\operatorname*{int}(X)\cap
Y=\emptyset$.

\begin{enumerate}
\item If $X_{1}$ is $C$-upward then $X$ and $Y$ are separated.

\item If $X_{1}$ is $C$-downward then $X$ and $Y$ are separated.
\end{enumerate}
\end{theorem}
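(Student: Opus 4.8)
The plan is to reduce the assertion to the classical separation theorem for an open convex set and a disjoint convex set (as in \cite{Kell63}), applied to $\operatorname*{int}(X)$ and $\operatorname*{co}(Y)$. First I would record the convexity structure of $X$: since $X=X_{1}+\ldots+X_{m}$ is the Minkowski sum of $C$-antichain-convex sets one of which ($X_{1}$) is $C$-upward, Corollary 1 in \cite{Cepa19} guarantees that $X$ is convex and $C$-upward; in particular $\operatorname*{co}(X)=X$ and $\operatorname*{int}(X)$ is convex. I would then check that $\operatorname*{int}(X)$ is itself $C$-upward: taking $x\in\operatorname*{int}(X)$ with $x+\mathrm{U}\subseteq X$ for an open neighborhood $\mathrm{U}$ of $0$ and any $c\in C$, part 1 of Lemma 5 in \cite{Cepa19} gives $(x+u)+c\in X$ for every $u\in\mathrm{U}$, whence $(x+c)+\mathrm{U}\subseteq X$ and $x+c\in\operatorname*{int}(X)$. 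Being convex, $\operatorname*{int}(X)$ is also $C$-antichain-convex, hence decomposably $C$-antichain-convex.

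The key step is to upgrade the hypothesis $\operatorname*{int}(X)\cap Y=\emptyset$ to $\operatorname*{int}(X)\cap\operatorname*{co}(Y)=\emptyset$. Here I would apply Theorem \ref{TEOCON} to the disjoint decomposably $C$-antichain-convex sets $\operatorname*{int}(X)$ and $Y$: since $\operatorname*{int}(X)$ is $C$-upward, part 1 of that theorem yields $\operatorname*{co}(\operatorname*{int}(X))\cap\operatorname*{co}(Y)=\emptyset$, and the convexity of $\operatorname*{int}(X)$ turns this into $\operatorname*{int}(X)\cap\operatorname*{co}(Y)=\emptyset$. This disjointness-of-convex-hulls step is exactly where the possible non-convexity of $Y$ is absorbed, and I expect it to be the main point of the argument; everything else is a routine passage to convex hulls and closures.

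With $\operatorname*{int}(X)$ a nonempty open convex set and $\operatorname*{co}(Y)$ a nonempty (as $Y\neq\emptyset$) convex set disjoint from it, the classical separation theorem provides $f\in V^{\ast}\backslash\{0\}$ with $\sup f[\operatorname*{int}(X)]\leq\inf f[\operatorname*{co}(Y)]$. Finally I would transfer this back to $X$ and $Y$: since $X$ is convex with nonempty interior, $X\subseteq\operatorname*{cl}(\operatorname*{int}(X))$, so continuity of $f$ gives $\sup f[X]\leq\sup f[\operatorname*{int}(X)]$; together with $\inf f[\operatorname*{co}(Y)]\leq\inf f[Y]$ this yields $\sup f[X]\leq\inf f[Y]$, i.e. $f$ separates $X$ and $Y$ with $f\neq 0$. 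This proves part 1.

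Part 2 then follows by duality. By Remark \ref{K2}, $X_{1}$ is $C$-downward if and only if it is $-C$-upward, and each of $X_{1},\ldots,X_{m},Y_{1},\ldots,Y_{n}$ is $-C$-antichain-convex if and only if it is $C$-antichain-convex; moreover the hypotheses $\operatorname*{int}(X)\neq\emptyset$ and $\operatorname*{int}(X)\cap Y=\emptyset$ do not involve $C$. Hence applying part 1 with $-C$ in place of $C$ produces an $f\in V^{\ast}\backslash\{0\}$ separating $X$ and $Y$, as required.
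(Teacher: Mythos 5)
Your proof is correct and follows essentially the same route as the paper's: establish the convexity of $X$, upgrade $\operatorname*{int}(X)\cap Y=\emptyset$ to $\operatorname*{int}(X)\cap\operatorname*{co}(Y)=\emptyset$ via the disjoint-convex-hulls machinery, apply the classical separation theorem, and pass from $\operatorname*{co}(Y)$ back to $Y$. The only (welcome) difference is that you explicitly verify that $\operatorname*{int}(X)$ is $C$-upward before invoking Theorem \ref{TEOCON}, a detail the paper leaves implicit when it applies Corollary \ref{Coro3} to $\operatorname*{int}(X)$, and you separate $\operatorname*{int}(X)$ from $\operatorname*{co}(Y)$ and then take closures where the paper cites a version of the separation theorem that handles $X$ directly.
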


\begin{proof}
1. Suppose $X_{1}$ is $C$-upward. As $X_{1}$ is $C$-antichain-convex and
$C$-upward, part 1 of Corollary 1 in \cite{Cepa19} ensures that $X$ is convex.
The topological interior of a convex set is convex (see, e.g., Theorem 1.1.2
in \cite{Zali02}): thus $\operatorname*{co}(\operatorname*{int}%
(X))=\operatorname*{int}(X)$ and part 1 of Corollary \ref{Coro3} ensures that
$\operatorname*{int}(X)\cap\operatorname*{co}(Y)=\emptyset$. Consequently, the
Separation Theorem 14.2 in \cite{Kell63} guarantees the existence of $f$ in
$V^{\ast}\backslash\{0\}$ such that $\sup f[X]\leq\inf f[\operatorname*{co}%
(Y)]$. As $\emptyset\neq Y\subseteq\operatorname*{co}(Y)$ we have that $\inf
f[\operatorname*{co}(Y)]\leq\inf f[Y]$. We conclude that $\sup f[X]\leq\inf
f[Y]\text{.}$

2. A consequence of part 1 of Theorem \ref{t:int} and Remark \ref{K2}.
\end{proof}

\subsection{\label{CAC}Closed and compact sets\ \ \ \ \ \ \ \ }

\begin{theorem}
\label{KLOSE}Let $V$ be a LCS and $C$ be a cone in $V$. Assume that $X$ and
$Y$ are nonempty decomposably $C$-antichain-convex subsets of $V$. Suppose $X$
is closed, $\operatorname*{co}(Y)$ is compact and $X\cap Y=\emptyset$.

\begin{enumerate}
\item If $X$ is $C$-upward then $X$ and $Y$ are strictly separated.

\item If $X$ is $C$-downward then $X$ and $Y$ are strictly separated.
\end{enumerate}
\end{theorem}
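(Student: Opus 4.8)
The plan is to reduce the strict separation of the possibly non-convex sets $X$ and $Y$ to the classical strict separation of a closed convex set from a disjoint compact convex set, and then to invoke a standard Hahn--Banach-type theorem valid in any LCS. As usual, I would first dispose of part 2 by Remark \ref{K2}: since $X$ is $C$-downward if and only if $X$ is $-C$-upward, and since being decomposably $C$-antichain-convex is insensitive to replacing $C$ by $-C$, part 2 is exactly part 1 applied to the cone $-C$. So I would concentrate on part 1 and assume that $X$ is $C$-upward.

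The first key step is to convexify $X$ at no cost: since $X$ is decomposably $C$-antichain-convex and $C$-upward, part 1 of Theorem \ref{STANGHI} yields that $X$ is convex, whence $X=\operatorname*{co}(X)$. The second key step is to transfer the disjointness to the convex hulls: since $X$ and $Y$ are disjoint decomposably $C$-antichain-convex sets with $X$ being $C$-upward, part 1 of Theorem \ref{TEOCON} gives $\operatorname*{co}(X)\cap\operatorname*{co}(Y)=\emptyset$, and combining this with $X=\operatorname*{co}(X)$ produces $X\cap\operatorname*{co}(Y)=\emptyset$.

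At this point $X$ is a nonempty closed convex set and $\operatorname*{co}(Y)$ is a nonempty compact convex set (convex hulls are convex, and $\operatorname*{co}(Y)$ is compact by hypothesis), and the two are disjoint. In a locally convex space such a pair can be strictly separated: there exists $f\in V^{\ast}\backslash\{0\}$ and reals $\alpha<\beta$ with $\sup f[X]\leq\alpha<\beta\leq\inf f[\operatorname*{co}(Y)]$, where $f$ is taken (replacing $f$ by $-f$ if necessary) so that the inequalities point in the direction matching the paper's definition of strict separation. Finally, since $Y\subseteq\operatorname*{co}(Y)$ one has $\inf f[\operatorname*{co}(Y)]\leq\inf f[Y]$, so $\sup f[X]<\inf f[Y]$ and $f$ strictly separates $X$ and $Y$.

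The only genuinely non-routine ingredient is the strict separation theorem itself, and this is precisely where the full strength of the hypotheses is consumed: the local convexity of $V$ guarantees a nonzero continuous separating functional, while the compactness of $\operatorname*{co}(Y)$ together with the closedness of $X$ is what upgrades mere separation to \emph{strict} separation. I therefore expect the main obstacle to be merely locating and citing the exact strict-separation statement for a LCS (for instance, a strict-separation counterpart of the Separation Theorem 14.2 in \cite{Kell63}); all the order-theoretic and convexity work has already been carried out in Theorems \ref{STANGHI} and \ref{TEOCON}.
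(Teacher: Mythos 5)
Your proof is correct and coincides with the paper's own argument: convexify $X$ via part 1 of Theorem \ref{STANGHI}, obtain $X\cap\operatorname*{co}(Y)=\emptyset$ via part 1 of Theorem \ref{TEOCON}, and apply the classical strict separation of a disjoint closed convex set and compact convex set in a LCS (the paper cites Corollary 14.4 in \cite{Kell63}, exactly the strict-separation counterpart you anticipated), then pass from $\inf f[\operatorname*{co}(Y)]$ to $\inf f[Y]$. Part 2 is likewise handled by Remark \ref{K2}.
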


\begin{proof}
1. Suppose $X$ is $C$-upward. As $X$ is decomposably $C$-antichain-convex and
$C$-upward, part 1 of Theorem \ref{STANGHI} ensures the convexity of $X$. So
$\operatorname*{co}(X)=X$ and part 1 of Theorem \ref{TEOCON} implies
$X\cap\operatorname*{co}(Y)=\emptyset$. Thus, by Corollary 14.4 in
\cite{Kell63} there exists $f\in V^{\ast}\backslash\{0\}$ such that $\sup
f[X]<\inf f[\operatorname*{co}(Y)]$. As $\emptyset\neq Y\subseteq
\operatorname*{co}(Y)$, we have that $\inf f[\operatorname*{co}(Y)]\leq\inf
f[Y]$. We conclude that $\sup f[X]<\inf f[Y]\text{.}$

2. A consequence of part 1 of Theorem \ref{KLOSE} and Remark \ref{K2}.
\end{proof}

\ \ \

Corollary \ref{t:klo} is a simple consequence for the finite-dimensional case.

\begin{corollary}
\label{t:klo}Let $C$ be a cone in $\mathbb{R}^{n}$. Assume that $X$ and $Y$
are nonempty closed decomposably $C$-antichain-convex subsets of
$\mathbb{R}^{n}$. Suppose $Y$ is bounded and $X\cap Y=\emptyset$.

\begin{enumerate}
\item If $X$ is $C$-upward then $X$ and $Y$ are strictly separated.

\item If $X$ is $C$-downward then $X$ and $Y$ are strictly separated.
\end{enumerate}
\end{corollary}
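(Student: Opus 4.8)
The plan is to obtain the corollary directly from Theorem \ref{KLOSE} by verifying, in the finite-dimensional setting, all of the hypotheses of that theorem. First I would observe that $\mathbb{R}^{n}$ is a LCS, so Theorem \ref{KLOSE} is applicable once its hypotheses are checked. The sets $X$ and $Y$ are nonempty and decomposably $C$-antichain-convex by assumption, $X$ is closed by assumption, $X\cap Y=\emptyset$ by assumption, and $X$ is $C$-upward (resp. $C$-downward) in part 1 (resp. part 2). Hence the only hypothesis of Theorem \ref{KLOSE} that is not immediately listed among the assumptions of the corollary is the compactness of $\operatorname*{co}(Y)$.

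The crux of the argument is therefore to show that $\operatorname*{co}(Y)$ is compact. Since $Y$ is closed and bounded in $\mathbb{R}^{n}$, the Heine--Borel theorem guarantees that $Y$ is compact. I would then invoke the classical fact that in a finite-dimensional space the convex hull of a compact set is compact. Concretely, by Carathéodory's theorem every point of $\operatorname*{co}(Y)$ is a convex combination of at most $n+1$ points of $Y$, so $\operatorname*{co}(Y)$ is the image of the compact set $\Delta_{n}\times Y^{n+1}$---where $\Delta_{n}$ denotes the unit simplex in $\mathbb{R}^{n+1}$---under the continuous map $(\lambda,(y_{0},\ldots,y_{n}))\mapsto\sum_{i=0}^{n}\lambda_{i}y_{i}$. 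Being the continuous image of a compact set, $\operatorname*{co}(Y)$ is compact.

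Having verified that $X$ is closed, that $\operatorname*{co}(Y)$ is compact and that $X\cap Y=\emptyset$, I would apply part 1 (resp. part 2) of Theorem \ref{KLOSE} to conclude that $X$ and $Y$ are strictly separated. The main (indeed essentially the only) nontrivial point is the compactness of $\operatorname*{co}(Y)$; this is a well-known consequence of finite-dimensionality via Carathéodory's theorem, and the remainder of the proof is a routine matching of hypotheses, which explains why the result is stated as a corollary.
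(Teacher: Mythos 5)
Your proposal is correct and follows essentially the same route as the paper: reduce to Theorem \ref{KLOSE} by noting that $Y$ is compact (Heine--Borel) and that the convex hull of a compact subset of $\mathbb{R}^{n}$ is compact. The only difference is that the paper cites this last fact from the literature, whereas you supply the standard Carath\'eodory-based argument for it.
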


\begin{proof}
The convex hull of a compact subset of $\mathbb{R}^{n}$ is compact: see, e.g.,
Corollary 5.33 in \cite{Alip06}. Said this, the assertion follows directly
from Theorem \ref{KLOSE}.
\end{proof}

\subsection{\label{QQRRII}Quasi-relative interior}

In the following Lemma \ref{t:qriup} we use the characterization of a
quasi-relative interior as enunciated in Lemma 3.6 in \cite{Vanc19}. Other
characterizations have been proved in the literature (like, e.g., Proposition
2.16 in \cite{Borw92} or Theorem 2.3 in \cite{Flor13}): we use that in
\cite{Vanc19} for expositional convenience.

\begin{lemma}
\label{t:qriup} Let $V$ be a LCS and let $C$ be a cone in $V$. Assume that $X
$ is a convex subset of $V$.

\begin{enumerate}
\item If $X$ is $C$-upward, then $\operatorname*{qri}(X)$ is $C$-upward.

\item If $X$ is $C$-downward, then $\operatorname*{qri}(X)$ is $C$-downward.
\end{enumerate}
\end{lemma}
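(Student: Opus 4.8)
The plan is to reduce part~2 to part~1 by Remark~\ref{K2} (since $X$ is $C$-downward iff $X$ is $(-C)$-upward, while $\operatorname*{qri}(X)$ depends only on $X$), and to prove part~1 directly from the definition of the quasi-relative interior. So assume $X$ is convex and $C$-upward, i.e.\ $X+C\subseteq X$ as recalled in the introduction. I fix $z\in\operatorname*{qri}(X)$ and $c\in C$, put $w=z+c$, and must show $w\in\operatorname*{qri}(X)$. Since $z\in X$ and $X+C\subseteq X$, first of all $w\in X$. Writing $L=\operatorname*{cl}(\operatorname*{cone}(X-z))$, the membership $z\in\operatorname*{qri}(X)$ says that $L$ is a subspace of $V$, and it then remains only to show that $\operatorname*{cl}(\operatorname*{cone}(X-w))$ is a subspace; I will do this by proving the equality $\operatorname*{cl}(\operatorname*{cone}(X-w))=L$.

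The engine of the argument is that $C$-upwardness forces a full ray into $X$: as $C$ is a cone, $tc\in C$ for every $t>0$, whence $z+tc\in X+C\subseteq X$ for all $t\geq0$, and likewise $w+c=z+2c\in X$. Consequently $c=(z+c)-z\in\operatorname*{cone}(X-z)$ and $c=(w+c)-w\in\operatorname*{cone}(X-w)$. I also record that, $X$ being convex and containing both $z$ and $w$, the translates $X-z$ and $X-w$ are convex sets containing the origin, so $\operatorname*{cone}(X-z)$ and $\operatorname*{cone}(X-w)$ are convex cones; hence these cones (and their closures) are closed under addition by Remark~\ref{TENTO}.

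Now I establish the two inclusions. For $L\subseteq\operatorname*{cl}(\operatorname*{cone}(X-w))$: given $x\in X$, write $x-z=(x-w)+c$, where both $x-w$ and $c$ lie in $\operatorname*{cone}(X-w)$; by the additive closure just noted, $x-z\in\operatorname*{cone}(X-w)$, so $\operatorname*{cone}(X-z)\subseteq\operatorname*{cone}(X-w)$ and, taking closures, $L\subseteq\operatorname*{cl}(\operatorname*{cone}(X-w))$. For the reverse inclusion: given $x\in X$, write $x-w=(x-z)-c$; since $x-z\in\operatorname*{cone}(X-z)\subseteq L$ and $c\in\operatorname*{cone}(X-z)\subseteq L$ and $L$ is a subspace, $x-w\in L$; hence $\operatorname*{cone}(X-w)\subseteq L$, and $L$ being closed gives $\operatorname*{cl}(\operatorname*{cone}(X-w))\subseteq L$. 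Combining, $\operatorname*{cl}(\operatorname*{cone}(X-w))=L$ is a subspace, so $w=z+c\in\operatorname*{qri}(X)$, which is precisely the $C$-upwardness of $\operatorname*{qri}(X)$.

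The step I expect to be the crux is the inclusion $\operatorname*{cl}(\operatorname*{cone}(X-w))\subseteq L$: it is the only place where the hypothesis $z\in\operatorname*{qri}(X)$ is genuinely used, namely to supply $-c\in L$ (the closure of a convex cone need not be a subspace in general, so this symmetry cannot be dispensed with). Dually, the $C$-upward hypothesis enters essentially only through the two memberships $c\in\operatorname*{cone}(X-z)$ and $c\in\operatorname*{cone}(X-w)$, everything else being formal manipulation of closed convex cones. I would finally check the harmless degenerate cases in which $C$ or $X$ is empty, for which both sides of the claimed implication hold vacuously.
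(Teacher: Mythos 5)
Your proof is correct, but it takes a genuinely different route from the paper's. The paper argues by contradiction through the separation-theoretic characterization of the quasi-relative interior (Lemma 3.6 in the cited work of Vancea et al.): if $t=z+c\notin\operatorname*{qri}(X)$ then $X$ and $\{t\}$ are properly separated by some $f$, Theorem \ref{t:cepaquart} forces $f(c)\leq 0$, and the same $f$ then properly separates $X$ and $\{z\}$, contradicting $z\in\operatorname*{qri}(X)$. You instead work directly with the definition, showing that the closed conic hulls $\operatorname*{cl}(\operatorname*{cone}(X-z))$ and $\operatorname*{cl}(\operatorname*{cone}(X-w))$ actually coincide, using only that $C$-upwardness puts $c$ into both cones and that the subspace property of $L$ supplies $-c$. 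Both arguments are sound; I checked your two inclusions and the auxiliary facts (convexity of $\operatorname*{cone}(X-z)$ and $\operatorname*{cone}(X-w)$ as conic hulls of convex sets containing the origin, hence additive closure via Remark \ref{TENTO}, and the memberships $z+tc\in X$ for $t\geq 0$). Your approach is more elementary and self-contained --- it does not invoke the external characterization of $\operatorname*{qri}$ by proper separation, nor Theorem \ref{t:cepaquart}, and it is direct rather than by contradiction; it also yields the slightly stronger conclusion that the closed conic hull is constant along the ray $z+\operatorname*{cone}(\{c\})$. The paper's approach buys uniformity with the rest of Section \ref{QQRRII}, which is organized around the separation machinery of \cite{Vanc19}, so the same lemma is reused immediately afterwards. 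One small presentational point: the definition of $C$-upward as printed in Section \ref{PRE} literally quantifies over $(x,y)\in S\times S$, which would make it vacuous; you correctly read it as $S+C\subseteq S$ (as the introduction and \cite{Cepa19} intend), and your proof depends on that reading in the same way the paper's does.
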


\begin{proof}
1. Suppose $X$ is $C$-upward. The proof is trivial if either
$\operatorname*{qri}(X)=\emptyset$ or $C=\emptyset$. Henceforth suppose
$\operatorname*{qri}(X)\neq\emptyset$ or $C\neq\emptyset$. Suppose
$z\in\operatorname*{qri}(X)$ and $c\in C$. Putting%
\begin{equation}
t=z+c\text{,}\label{lsep0}%
\end{equation}
we are done if we show that $t\in\operatorname*{qri}(X)$. By way of
contradiction, suppose $t\notin\operatorname*{qri}(X)$. Lemma 3.6 in
\cite{Vanc19} implies that $X$ and $\{t\}$ can be properly separated and so
there exists $f\in V^{\ast}\backslash\{0\}$ and $x_{0}\in X$ such that%
\begin{equation}
f(x)\leq f(t)\text{\ for all }x\in X\label{lsep1}%
\end{equation}
and%
\begin{equation}
f(x_{0})<f(t)\text{.}\label{lsep2}%
\end{equation}
Proper separation implies separation: part 1 of Theorem \ref{t:cepaquart} then
ensures that%
\begin{equation}
f(c)\leq0\text{.}\label{lsep4}%
\end{equation}
By the linearity of $f$, from (\ref{lsep1}), (\ref{lsep0}) and (\ref{lsep4})
we infer that
\begin{equation}
f(x)\leq f(z)\text{\ for all }x\in X\label{hsep}%
\end{equation}
and from (\ref{lsep2}), (\ref{lsep0}) and (\ref{lsep4}) we infer that%
\begin{equation}
f(x_{0})<f(z)\text{.}\label{speriamo3}%
\end{equation}
Then, inequalities in (\ref{hsep}) and (\ref{speriamo3}) imply that $X$ and
$\{z\}$ are properly separated and Lemma 3.6 in \cite{Vanc19} in turn implies
that $z\notin\operatorname*{qri}(X)$: a contradiction with the assumption that
$z\in\operatorname*{qri}(X)$.

2. A consequence of part 1 of Lemma \ref{t:qriup} and Remark \ref{K2}.
\end{proof}

\begin{theorem}
\label{t:qri} Let $V$ be a LCS and let $C$ be a cone in $V$. Assume that $X$
and $Y$ are two nonempty decomposably $C$-antichain-convex subsets of $V$.

\begin{enumerate}
\item Assume that $X$ is $C$-upward, that $\operatorname*{iri}(X)\neq
\emptyset$, that $\operatorname*{iri}(\operatorname*{co}(Y))\neq\emptyset$,
that $\operatorname*{qri}(X)\cap Y=\emptyset$ and that $X-Y$ is quasi-regular.
Then $X$ and $Y$ are properly separated.

\item Assume that $X$ is $C$-downward, that $\operatorname*{iri}%
(X)\neq\emptyset$, that $\operatorname*{iri}(\operatorname*{co}(Y))\neq
\emptyset$, that $\operatorname*{qri}(X)\cap Y=\emptyset$ and that $X-Y$ is
quasi-regular. Then $X$ and $Y$ are properly separated.
\end{enumerate}
\end{theorem}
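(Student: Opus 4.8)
The plan is to reduce the separation of the (possibly non-convex) pair $(X,Y)$ to a proper separation of the two convex sets $X$ and $\operatorname*{co}(Y)$, and to isolate the one genuinely new ingredient as a disjointness statement about the quasi-relative interior of $X$. I would prove part~1 only; part~2 follows from part~1 together with Remark~\ref{K2}, exactly as in the dual halves of all the preceding results (one replaces $C$ by $-C$ and uses that $X$ is $C$-downward iff $X$ is $-C$-upward, all the remaining hypotheses being independent of $C$).

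First I would record the reductions. Since $X$ is decomposably $C$-antichain-convex and $C$-upward, Theorem~\ref{STANGHI} gives that $X$ is convex, so $\operatorname*{co}(X)=X$ and, because the convex hull of a Minkowski sum equals the Minkowski sum of the convex hulls, $X-\operatorname*{co}(Y)=\operatorname*{co}(X-Y)$. Moreover, since a linear functional attains the same infimum and supremum on a set and on its convex hull, any $f$ that properly separates $X$ and $\operatorname*{co}(Y)$ also properly separates $X$ and $Y$; hence it suffices to properly separate the two convex sets $X$ and $\operatorname*{co}(Y)$. These have nonempty intrinsic relative interiors by hypothesis, and the quasi-regular set furnished by the assumption ``$X-Y$ is quasi-regular'' is read as the convex set $\operatorname*{co}(X-Y)=X-\operatorname*{co}(Y)$. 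The target is therefore to feed $X$ and $\operatorname*{co}(Y)$ into the proper-separation machinery of \cite{Vanc19} (ultimately the one-point characterization Lemma~3.6 already used in the proof of Lemma~\ref{t:qriup}), for which the decisive input is a disjointness of the relevant intrinsic relative interiors.

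The heart of the argument---and the place where the paper's own results enter---is the claim that
\[
\operatorname*{qri}(X)\cap\operatorname*{co}(Y)=\emptyset .
\]
I would argue by contradiction: suppose $a\in\operatorname*{qri}(X)\cap\operatorname*{co}(Y)$. Since $a\in\operatorname*{co}(Y)$ and $Y$ is decomposably $C$-antichain-convex, part~1 of Theorem~\ref{MS} furnishes $z\in Y$ with $z\in a+K$, where $K=\operatorname*{co}(C\cup\{0\})$; that is, $z-a\in K$. On the other hand $X$ is convex and $C$-upward, so part~1 of Lemma~\ref{t:qriup} gives that $\operatorname*{qri}(X)$ is $C$-upward, and the chain ``$C$-upward $\Leftrightarrow$ $(C\cup\{0\})$-upward $\Leftrightarrow$ $\operatorname*{co}(C\cup\{0\})$-upward'' (the first equivalence because appending $0$ does not affect upwardness, the second by part~1 of Theorem~\ref{T:CH} applied to the cone $C\cup\{0\}$) upgrades this to $K$-upwardness of $\operatorname*{qri}(X)$. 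Applying $K$-upwardness to $a\in\operatorname*{qri}(X)$ and $z-a\in K$ yields $z\in\operatorname*{qri}(X)$, so $z\in\operatorname*{qri}(X)\cap Y$, contradicting the hypothesis $\operatorname*{qri}(X)\cap Y=\emptyset$. This proves the claim; in particular $\operatorname*{iri}(X)\cap\operatorname*{iri}(\operatorname*{co}(Y))=\emptyset$, since $\operatorname*{iri}(X)\subseteq\operatorname*{qri}(X)$ and $\operatorname*{iri}(\operatorname*{co}(Y))\subseteq\operatorname*{qri}(\operatorname*{co}(Y))\subseteq\operatorname*{co}(Y)$.

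Finally I would convert this disjointness into proper separation. Writing $W=X-\operatorname*{co}(Y)=\operatorname*{co}(X-Y)$, quasi-regularity gives $\operatorname*{qri}(W)=\operatorname*{iri}(W)$, and the additivity of the intrinsic relative interior for convex sets with nonempty intrinsic relative interior gives $\operatorname*{iri}(W)=\operatorname*{iri}(X)-\operatorname*{iri}(\operatorname*{co}(Y))$; the disjointness $\operatorname*{iri}(X)\cap\operatorname*{iri}(\operatorname*{co}(Y))=\emptyset$ then forces $0\notin\operatorname*{qri}(W)$. By Lemma~3.6 in \cite{Vanc19}, $W$ and $\{0\}$ are properly separated, which (translating the inequalities through $W=X-\operatorname*{co}(Y)$, using $\sup f[W]=\sup f[X]-\inf f[\operatorname*{co}(Y)]$ and $\inf f[W]=\inf f[X]-\sup f[\operatorname*{co}(Y)]$) is equivalent to $X$ and $\operatorname*{co}(Y)$ being properly separated, hence to $X$ and $Y$ being properly separated. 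I expect the main obstacle to be precisely this last conversion: making the intrinsic-relative-interior bookkeeping rigorous---the additivity $\operatorname*{iri}(A)-\operatorname*{iri}(B)=\operatorname*{iri}(A-B)$ under the nonemptiness hypotheses, and the reading of ``$X-Y$ is quasi-regular'' as quasi-regularity of $\operatorname*{co}(X-Y)=X-\operatorname*{co}(Y)$---so that the disjointness proved above is exactly the input required by the separation theorem of \cite{Vanc19}. The disjointness claim itself, by contrast, is the conceptually central step, and it is where Theorems~\ref{MS} and \ref{T:CH} and Lemma~\ref{t:qriup} do their work.
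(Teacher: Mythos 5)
Your overall strategy is the paper's: reduce to the convex pair $X$ and $\operatorname*{co}(Y)$, show $\operatorname*{qri}(X)\cap\operatorname*{co}(Y)=\emptyset$, and then invoke the quasi-relative-interior separation machinery of \cite{Vanc19}. Your disjointness argument is correct and is essentially an in-line re-derivation of what the paper obtains by citing part 1 of Theorem \ref{TEOCON} (applied to the convex, $C$-upward set $\operatorname*{qri}(X)$, using Lemma \ref{t:qriup}); your direct route through Theorem \ref{MS} and Theorem \ref{T:CH} is equally valid. Your observation that a linear functional has the same supremum and infimum on $Y$ and on $\operatorname*{co}(Y)$, so that proper separation of $X$ and $\operatorname*{co}(Y)$ is \emph{equivalent} to proper separation of $X$ and $Y$, is correct and is actually shorter than the paper's final step, which instead produces the strictly dominated witness $y_{0}\in Y$ via part 2 of Theorem \ref{MS} and Corollary \ref{c:co}.

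The one genuine gap is your endgame. The paper applies Theorem 5.3 of \cite{Vanc19} directly to the pair $(X,\operatorname*{co}(Y))$, and at the point where you stop you have already assembled exactly its hypotheses: both sets convex, $\operatorname*{iri}(X)\neq\emptyset$, $\operatorname*{iri}(\operatorname*{co}(Y))\neq\emptyset$, $X-\operatorname*{co}(Y)=X-Y$ quasi-regular, and $\operatorname*{qri}(X)\cap\operatorname*{qri}(\operatorname*{co}(Y))=\emptyset$. Instead of citing it you attempt to reconstruct it from Lemma 3.6, and that reconstruction rests on two unestablished steps: the additivity $\operatorname*{iri}(A-B)=\operatorname*{iri}(A)-\operatorname*{iri}(B)$ (true under the nonemptiness hypotheses, but a nontrivial fact you would have to prove or locate), and the application of the one-point characterization of $\operatorname*{qri}$ to the point $0$, which in the relevant case does \emph{not} belong to $W=X-\operatorname*{co}(Y)$ (the sets are disjoint), whereas Lemma 3.6 as used in the paper is a characterization of quasi-relative interior points \emph{of} the set. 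Separating a convex set from an exterior point in an infinite-dimensional LCS without interiority is precisely the nontrivial content of Theorem 5.3, so you cannot get it for free from the membership characterization. The fix is immediate: replace your last paragraph with a direct appeal to Theorem 5.3 of \cite{Vanc19}, after which your convex-hull sup/inf remark finishes the proof.
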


\begin{proof}
1. As $X$ is decomposably $C$-antichain-convex and $C$-upward, part 1 of
Theorem \ref{STANGHI} ensures the convexity of $X$. Moreover, part 6 of Lemma
3 in \cite{Cepa19} and part 1 of Corollary 1 in \cite{Cepa19} ensure the
convexity of $X-Y$. So
\begin{equation}
X-Y=\operatorname*{co}(X-Y)=\operatorname*{co}(X)-\operatorname*{co}%
(Y)=X-\operatorname*{co}(Y)\text{.}\label{HBA}%
\end{equation}
The equalities in (\ref{HBA}) and the quasi-regularity of $X-Y$ entail the
quasi-regularity of $X-\operatorname*{co}(Y)$. The set $\operatorname*{qri}%
(X)$ is $C$-upward by part 1 of Lemma \ref{t:qriup} and is convex by
Proposition 2.11 in \cite{Borw92}.\ The assumption that $\operatorname*{qri}%
(X)\cap Y=\emptyset$ and part 1 of Theorem \ref{TEOCON} ensure that
\begin{equation}
\operatorname*{qri}(X)\cap\operatorname*{co}(Y)=\emptyset\text{.}%
\label{priemp}%
\end{equation}
As $\operatorname*{qri}(\operatorname*{co}(Y))\subseteq\operatorname*{co}(Y)$,
the equality in (\ref{priemp}) implies
\begin{equation}
\operatorname*{qri}(X)\cap\operatorname*{qri}(\operatorname*{co}%
(Y))=\emptyset\text{.}\label{HZP}%
\end{equation}
As $X-\operatorname*{co}(Y)$ is quasi-regular, the equality in (\ref{HZP}) and
the assumptions that $\operatorname*{iri}(X)\neq\emptyset$ and
$\operatorname*{iri}(\operatorname*{co}(Y))\neq\emptyset$ allow the
applicability of Theorem 5.3 in \cite{Vanc19}, which ensures the existence of
$f\in V^{\ast}\backslash\{0\} $ and $(x_{0},t_{0})\in X\times
\operatorname*{co}(Y)$ such that%
\begin{equation}
f(x)\leq f(t)\text{\ for all }(x,t)\in X\times\operatorname*{co}%
(Y)\label{qlsep1}%
\end{equation}
and%
\begin{equation}
f(x_{0})<f(t_{0})\text{.}\label{qlsep2}%
\end{equation}
As $Y\subseteq\operatorname*{co}(Y)$, from (\ref{qlsep1}) we infer that
\begin{equation}
f(x)\leq f(y)\text{\ for all }(x,y)\in X\times Y\text{.}\label{qfcco3}%
\end{equation}
As $t_{0}\in\operatorname*{co}(Y)$ and $Y$ is decomposably $C$%
-antichain-convex, part 2 of Theorem \ref{MS} ensures the existence of
$y_{0}\in Y$ such that $y_{0}\in t_{0}-\operatorname*{co}(C\cup\{0\})$. So
there exists $k\in\operatorname*{co}(C\cup\{0\})$ such that $t_{0}=y_{0}+k$.
The inequality in (\ref{qfcco3}) ensures that $f$ separates $X$ and $Y$: the
inequality in (\ref{qlsep2}), the linearity of $f$ and part 1 of Corollary
\ref{c:co} then imply
\begin{equation}
f(x_{0})<f(t_{0})=f(y_{0})+f(k)\leq f(y_{0})\text{.}\label{fps}%
\end{equation}
So the inequalities in (\ref{qfcco3}) and (\ref{fps}) imply the pope
separation of $X$ and $Y$.

2. A consequence of part 1 of Theorem \ref{t:qri} and Remark \ref{K2}.
\end{proof}

\

Corollary \ref{t:fint} is a simple consequence for the finite-dimensional case.

\begin{corollary}
\label{t:fint} Let $C$ be a cone in $\mathbb{R}^{n}$ and assume that $X$ and
$Y$ are two nonempty decomposably $C$-antichain-convex subsets of
$\mathbb{R}^{n}$.

\begin{enumerate}
\item If $X$ is $C$-upward and $\operatorname*{ri}(X)\cap Y=\emptyset$ then
$X$ and $Y$ are properly separated.

\item If $X$ is $C$-downward and $\operatorname*{ri}(X)\cap Y=\emptyset$ then
$X$ and $Y$ are properly separated.
\end{enumerate}
\end{corollary}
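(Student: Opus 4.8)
The plan is to deduce Corollary~\ref{t:fint} directly from Theorem~\ref{t:qri}, the point being that in $\mathbb{R}^{n}$ all the hypotheses of the latter collapse onto the single assumption $\operatorname*{ri}(X)\cap Y=\emptyset$. I would prove part~1 in full and then obtain part~2 from it via Remark~\ref{K2}: replacing $C$ by $-C$ leaves $X$ and $Y$ decomposably $C$-antichain-convex and leaves the hypothesis $\operatorname*{ri}(X)\cap Y=\emptyset$ unchanged, while turning the $C$-downwardness of $X$ into $(-C)$-upwardness, so that part~1 applies verbatim.

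For part~1, I would first record that $X$, being decomposably $C$-antichain-convex and $C$-upward, is convex by part~1 of Theorem~\ref{STANGHI}; this legitimizes applying the relative-interior notions to $X$. I would then invoke the finite-dimensional facts recalled in Sect.~\ref{PRESEP}, namely that $\operatorname*{ri}(Z)=\operatorname*{iri}(Z)=\operatorname*{qri}(Z)\neq\emptyset$ for every nonempty convex $Z\subseteq\mathbb{R}^{n}$. Applied to $X$, this gives $\operatorname*{iri}(X)\neq\emptyset$ together with $\operatorname*{qri}(X)=\operatorname*{ri}(X)$, so that the hypothesis $\operatorname*{ri}(X)\cap Y=\emptyset$ immediately yields $\operatorname*{qri}(X)\cap Y=\emptyset$; applied to the nonempty convex set $\operatorname*{co}(Y)$, it gives $\operatorname*{iri}(\operatorname*{co}(Y))\neq\emptyset$.

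The one hypothesis demanding a preliminary step is the quasi-regularity of $X-Y$, since that notion presupposes convexity. I would establish the convexity of $X-Y$ exactly as inside the proof of Theorem~\ref{t:qri}: part~6 of Lemma~3 in \cite{Cepa19} makes each $-Y_{i}$ a $C$-antichain-convex summand, so $X-Y$ is decomposably $C$-antichain-convex and, being a Minkowski sum involving the $C$-upward set $X$, is itself $C$-upward; then part~1 of Corollary~1 in \cite{Cepa19} gives its convexity. Once $X-Y$ is known to be convex, its quasi-regularity is automatic in $\mathbb{R}^{n}$, because there $\operatorname*{iri}(Z)=\operatorname*{qri}(Z)$ for every convex $Z$. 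With all five hypotheses of part~1 of Theorem~\ref{t:qri} now verified, that theorem delivers the proper separation of $X$ and $Y$, which finishes part~1. The main---indeed the only---obstacle is this quasi-regularity check, which forces one to pause and confirm the convexity of $X-Y$ before the finite-dimensional coincidence of interiors can be invoked; every other condition is a mechanical transcription of that coincidence.
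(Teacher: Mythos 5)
Your proof is correct and follows essentially the same route as the paper: the paper's own argument is the one-line observation that in $\mathbb{R}^{n}$ the identities $\operatorname*{ri}=\operatorname*{iri}=\operatorname*{qri}\neq\emptyset$ for nonempty convex sets, together with the automatic quasi-regularity of $X-Y$, reduce the corollary to Theorem \ref{t:qri}. You merely fill in the details the paper leaves implicit (the convexity of $X$ and of $X-Y$ needed before the relative-interior notions and quasi-regularity can be invoked, and the dualization of part 2 via Remark \ref{K2}), all of which is accurate.
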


\begin{proof}
Corollary \ref{t:fint} is a direct consequence of Theorem \ref{t:qri}: just
note that $V=\mathbb{R}^{n}$ implies $\operatorname*{qri}%
(X)=\operatorname*{iri}(X)=\operatorname*{ri}(X)\neq\emptyset$,
$\operatorname*{iri}(\operatorname*{co}(Y))=\operatorname*{ri}%
(\operatorname*{co}(Y))\neq\emptyset$ and the quasi-regularity of $X-Y$.
\end{proof}

\section{\label{MAX}Maximals and maximizers}

We now investigate the structure of maximals of a $C$-antichain-convex
relation and that of the maximizers of a $C$-antichain-quasiconcave function
for some constrained optimization problems that can be frequently encountered
in economics. After proving some general facts about the $C$%
-antichain-convexity of optimal solutions in Sect. \ref{ACMM} and some
sufficient conditions that guarantee their incomparability (with respect to
the relation generated by the cone $C$) in Sect. \ref{AMM}, we show a result
on the convexity of the set of optimal solutions of non-convex optimization
problems in Sect. \ref{CMM}. Finally, in Sect. \ref{IUC}, we consider the
subtler problem of identifying conditions under which the set of maximals of a
relation is equal to that of its convexification.

\subsection{\label{PREFDE}Preliminary definitions}

In this Sect. \ref{PREFDE} we fix the definitions and notation used in the
remainder of paper. We refer to Sect. \ref{PRE} for all general definitions
concerning relations.

\paragraph*{Relations and functions\newline}

\noindent Let $V$ be a RVS and $R$ be a relation on a convex subset $X$ of $V
$. Then

\begin{itemize}
\item the \textbf{convexification of }$R$ is the relation
$R^{\operatorname*{co}}$ on $X$ defined by%
\begin{equation}
R^{\operatorname*{co}}(x)=\operatorname*{co}(R(x))\text{ for all }x\in
X\text{.}\label{COFIC}%
\end{equation}

\end{itemize}

\noindent Let $V$ be a RVS, let $C$ be a cone in $V$ and let $X$ be a
$C$-antichain-convex subset of $V$. Also, let $R$ be a relation on $X$ and
$u:X\rightarrow\mathbb{R}$ be a function. Like in \cite{Cepa19}, we say that:

\begin{itemize}
\item $R$ is $C$\textbf{-antichain-convex} iff $R(x)$ is $C$-antichain-convex
for all $x\in X$;

\item $u$ is $C$\textbf{-antichain-quasiconcave} iff $\{x\in X:u(x)\geq
\lambda\}$ is $C$-antichain-convex for all $\lambda\in\mathbb{R}$.
\end{itemize}

\begin{remark}
\label{JON}Let $V$ be a RVS, $C$ be a cone in $V$ and $u:X\rightarrow
\mathbb{R}$ be a function on a $C$-antichain-convex set $X\subseteq V$. The
$C$-antichain-quasiconcavity of $u$ implies the $C$-antichain-convexity of the
relation $R$ defined by (\ref{JENNY}) in Remark \ref{JOY}.
\end{remark}

\paragraph*{Local nonsatiation\newline}

\noindent Let $V$ be a RVS endowed with some topology and $X$ be a subset of
$V$. A relation $R$ on $X$ is \textbf{locally nonsatiated} iff
\[
x\in\operatorname*{cl}(\{y\in X:y\in R(x)\ \text{and}\ x\notin R(y)\})\text{
for all }x\in X\text{;}%
\]
a function $u:X\rightarrow\mathbb{R}$ is \textbf{locally nonsatiated} iff
\[
x\in\operatorname*{cl}(\{y\in X:u(y)>u(x)\})\text{ for all }x\in X\text{.}%
\]
Note that the operator $\operatorname*{cl}$ is meant with respect to the
topology of $V$.

\begin{remark}
\label{JAN}Let $V$ be a RVS endowed with some topology and $u:X\rightarrow
\mathbb{R}$ be a function on subset $X$ of $V$. Then the local nonsatiation of
$u$ is equivalent to the local nonsatiation of the relation $R$ defined by
(\ref{JENNY}) in Remark \ref{JOY}.
\end{remark}

\paragraph*{Positivity\newline}

\noindent Let $V$ be a RVS endowed with some topology and $X$ be a subset of
$V$. A continuous linear functional on $V$ that is positive on $X\backslash
\{0\}$ is called a \textbf{positive functional on }$X$. The set $P_{X}$
defined by
\[
P_{X}=\{f\in V^{\ast}:f(x)>0\text{ for all }x\in X\backslash\{0\}\}
\]
is the \textbf{set of all positive functionals on }$X$. For any $w\in
\mathbb{R}$ and $f\in V^{\ast}$, put%
\[
F_{f}^{w}=\{v\in V:f(v)\leq w\}
\]
and $B_{f,X}^{w}=\{x\in X:f(x)\leq w\}$. Clearly,%
\[
B_{f,X}^{w}=F_{f}^{w}\cap X\text{.}%
\]
Mathematically, is immaterial to name $F_{f}^{w}$ and $B_{f,X}^{w}$. However,
those sets have an economic meaning explained in Remark \ref{ECOIN} which
justifies our notation.

\begin{remark}
\label{ECOIN}The economic interpretation of $V$, $X$, $P_{X}$, $F_{f}^{w}$ and
$B_{f,X}^{w}$ is as follows: $V$ is the set of all commodity vectors and $X$
that of all consumption vectors; $f\in P_{X}$ is a price functional (or
better, $f(x)$ is the expenditure\footnote{\label{ATTENZ}When $V=\mathbb{R}%
^{n}$, the value of the price functional at the consumption $x\in X$ specifies
the expenditure $px$ given by the scalar product of $p\in\mathbb{R}^{n}$ and
$x$: the vector $p$ is called a price (and should not be confused with the
price functional which specifies the expenditure).} of a consumer for $x\in
X$); $F_{f}^{w}$ is the \textbf{set of all financially feasible commodity
vectors} and $B_{f,X}^{w}$ is that of all financially feasible consumption
vectors also called the \textbf{budget set}. The set $\mathcal{M}%
(R,B_{f,X}^{w})$ is the \textbf{demand} of a consumer with a preference
relation $R$ and wealth $w$, who chooses a consumption vector out of $X$
facing a price functional $f$.
\end{remark}

\subsection{\label{ACMM}Antichain-convexity of sets of maximals and of sets of
maximizers}

Theorem \ref{OSSY} and Corollary \ref{OSSYCO} show general facts on the
antichain-convexity of the set of maximals of a relation and of the set of
maximizers of a function.

\begin{theorem}
\label{OSSY}Let $V$ be a RVS and $C$ be a cone in $V$. Suppose $X$ and $S$ are
$C$-antichain-convex subsets of $V$ such that $S\subseteq X$. Besides suppose
$R$ is a total and $C$-antichain-convex relation on $X$. Then $\mathcal{M}%
(R,S)$ is $C$-antichain-convex.
\end{theorem}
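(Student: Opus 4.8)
The plan is to verify the defining implication of $C$-antichain-convexity for $\mathcal{M}(R,S)$ directly, using Lemma \ref{BASE} to translate the rather indirect maximality condition into a uniform membership statement. Concretely, I would fix two maximals $m_{1},m_{2}\in\mathcal{M}(R,S)$ and a scalar $\lambda\in[0,1]$ satisfying the antichain hypothesis $m_{2}-m_{1}\notin C\cup-C$, write $m_{\lambda}=\lambda m_{1}+(1-\lambda)m_{2}$, and aim to prove $m_{\lambda}\in\mathcal{M}(R,S)$. (The cases where $\mathcal{M}(R,S)$ is empty or a singleton are covered vacuously by the same reasoning.)

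First I would establish that $m_{\lambda}$ actually lies in $S$. Since $m_{1},m_{2}\in\mathcal{M}(R,S)\subseteq S$ and $S$ is $C$-antichain-convex, the hypothesis $m_{2}-m_{1}\notin C\cup-C$ gives $m_{\lambda}\in S$ straight from the definition of $C$-antichain-convexity. Next I would invoke the totality of $R$ twice through Lemma \ref{BASE}: on the one hand, because $m_{1},m_{2}\in\mathcal{M}(R,S)$, that lemma yields $m_{1}\in R(s)$ and $m_{2}\in R(s)$ for every $s\in S$; on the other hand, to conclude $m_{\lambda}\in\mathcal{M}(R,S)$ it suffices, again by Lemma \ref{BASE}, to show $m_{\lambda}\in R(s)$ for every $s\in S$. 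The problem thus reduces to a single membership check for an arbitrary but fixed $s\in S$.

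The decisive step is to pass from $m_{1},m_{2}\in R(s)$ to $m_{\lambda}\in R(s)$ using the $C$-antichain-convexity of the relation $R$. By hypothesis $R(s)$ is $C$-antichain-convex; since $m_{1},m_{2}\in R(s)$ and the same antichain condition $m_{2}-m_{1}\notin C\cup-C$ holds, applying the definition of $C$-antichain-convexity to the set $R(s)$ yields $m_{\lambda}=\lambda m_{1}+(1-\lambda)m_{2}\in R(s)$. As $s\in S$ was arbitrary, Lemma \ref{BASE} delivers $m_{\lambda}\in\mathcal{M}(R,S)$, completing the verification.

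I do not anticipate a genuine obstacle: the argument is essentially a translation, via Lemma \ref{BASE}, of maximality into the uniform assertion that $m\in R(s)$ for all $s\in S$, after which the antichain-convexity of each upper-contour set $R(s)$ does all the work. The only point requiring care is bookkeeping about which set the antichain-convexity is applied to—the ambient set $S$ for the membership $m_{\lambda}\in S$, and the set $R(s)$ for the membership $m_{\lambda}\in R(s)$—together with the observation that the single antichain hypothesis $m_{2}-m_{1}\notin C\cup-C$ is reused verbatim at both levels, which is precisely what makes the totality assumption (and hence Lemma \ref{BASE}) indispensable.
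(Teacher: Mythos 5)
Your proof is correct and follows essentially the same route as the paper's: translate maximality into $m_{i}\in R(s)$ for all $s\in S$ via Lemma \ref{BASE}, then apply the $C$-antichain-convexity of each upper-contour set $R(s)$ to the convex combination. Your explicit check that $m_{\lambda}\in S$ (using the antichain-convexity of $S$) is a detail the paper's proof leaves implicit, but otherwise the two arguments coincide.
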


\begin{proof}
Suppose $m^{\bullet}$ and $m^{\circ}$ are elements of $\mathcal{M}(R,S)$ such
that
\begin{equation}
m^{\bullet}-m^{\circ}\notin(C\cup-C)\text{.}\label{MILLY}%
\end{equation}
Pick an arbitrary $\lambda\in\lbrack0,1]$ and put $m=\lambda m^{\bullet
}+(1-\lambda)m^{\circ}$. We are done if we show that $m\in\mathcal{M}(R,S) $.
Pick an arbitrary $s\in S$. As $m^{\bullet}$ and $m^{\circ}$ are elements of
$\mathcal{M}(R,S)$, Lemma \ref{BASE} ensures that%
\begin{equation}
m^{\bullet}\in R(s)\text{ and }m^{\circ}\in R(s)\text{.}\label{MILLO}%
\end{equation}
As $R(s)$ is a $C$-antichain-convex subset of $S$, from (\ref{MILLY}) and
(\ref{MILLO}) we infer that $m\in R(s)$. This suffices to conclude that
$m\in\mathcal{M}(R,S)$.
\end{proof}

\begin{corollary}
\label{OSSYCO}Let $V$ be a RVS and $C$ be a cone in $V$. Suppose $X$ and $S$
are $C$-antichain-convex subsets of $V$ such that $S\subseteq X$. Besides
suppose $u:X\rightarrow\mathbb{R}$ is $C$-antichain-quasiconcave. Then
$\arg\max_{s\in S}u(s)$ is $C$-antichain-convex.
\end{corollary}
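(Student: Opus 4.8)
The plan is to deduce this statement directly from Theorem \ref{OSSY} by translating the function $u$ into its associated upper-contour preorder, so that maximizers of $u$ become maximals of a relation. First I would introduce the relation $R$ on $X$ defined by $R(x)=\{y\in X:u(y)\geq u(x)\}$, that is, the relation given by (\ref{JENNY}) in Remark \ref{JOY}. By that remark, $R$ is a total preorder and, crucially, $\arg\max_{s\in S}u(s)=\mathcal{M}(R,S)$. Hence it suffices to prove that the set of $R$-maximals on $S$ is $C$-antichain-convex.

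Next I would verify that $R$ satisfies the two hypotheses of Theorem \ref{OSSY}. Totality is immediate from Remark \ref{JOY}. For the $C$-antichain-convexity of $R$, I would invoke Remark \ref{JON}: the $C$-antichain-quasiconcavity of $u$ that is hypothesized here yields precisely that $R$ is a $C$-antichain-convex relation on $X$, since each upper-contour set $R(x)$ is a superlevel set $\{y\in X:u(y)\geq u(x)\}$ of $u$ and is therefore $C$-antichain-convex by the very definition of $C$-antichain-quasiconcavity.

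With $X$ and $S$ both $C$-antichain-convex and $S\subseteq X$, and with $R$ total and $C$-antichain-convex, Theorem \ref{OSSY} applies and delivers that $\mathcal{M}(R,S)$ is $C$-antichain-convex. Since this set coincides with $\arg\max_{s\in S}u(s)$, the conclusion follows. I do not anticipate any genuine obstacle: the corollary is essentially a repackaging of Theorem \ref{OSSY} through the two bridging remarks, and the only substantive point is that the quasiconcavity assumption on $u$ translates into antichain-convexity of the contour relation $R$—a translation that Remark \ref{JON} already records.
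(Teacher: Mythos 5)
Your argument is correct and is exactly the route the paper takes: the paper's proof is the one-line citation of Remark \ref{JON} and Theorem \ref{OSSY}, with Remark \ref{JOY} implicitly supplying the totality of $R$ and the identification $\arg\max_{s\in S}u(s)=\mathcal{M}(R,S)$. You have simply written out the same reduction in full detail.
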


\begin{proof}
A consequence of Remark \ref{JON} and Theorem \ref{OSSY}.
\end{proof}

\subsection{\label{AMM}Antichains of maximals and of maximizers}

Theorem \ref{SAB} and Corollary \ref{SSAABB} show sufficient conditions for
the set of maximals of a relation and for that of maximizers of a function to
be $C$-antichains.

\begin{lemma}
\label{UGUA}Let $V$ be a RVS endowed with some topology. Suppose
$(w,f)\in\mathbb{R\times}V^{\ast}$. Then%
\[
z\in\operatorname*{bd}(F_{f}^{w})\Rightarrow f(z)=w\text{.}%
\]

\end{lemma}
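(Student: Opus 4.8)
The plan is to exploit the continuity of $f$ to control both the closure and the interior of the sublevel set $F_{f}^{w}$. First I would recall that, since $f\in V^{\ast}$ is continuous, the set $F_{f}^{w}=\{v\in V:f(v)\leq w\}=f^{-1}((-\infty,w])$ is closed, being the preimage of a closed subset of $\mathbb{R}$ under a continuous map. Consequently $\operatorname*{cl}(F_{f}^{w})=F_{f}^{w}$, so any boundary point $z$ satisfies $z\in F_{f}^{w}$, which already delivers the inequality $f(z)\leq w$.

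The complementary inequality will come from the interior. I would introduce the strict sublevel set $O=\{v\in V:f(v)<w\}=f^{-1}((-\infty,w))$, which is open because it is the preimage of an open subset of $\mathbb{R}$ under the continuous functional $f$. Since $O\subseteq F_{f}^{w}$ and $O$ is open, we have $O\subseteq\operatorname*{int}(F_{f}^{w})$. Now a boundary point $z$ lies in $\operatorname*{cl}(F_{f}^{w})$ but not in $\operatorname*{int}(F_{f}^{w})$; in particular $z\notin O$, which means $f(z)\geq w$. Combining with $f(z)\leq w$ yields $f(z)=w$, as desired.

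I do not expect any genuine obstacle here: the statement is essentially the observation that a continuous linear functional is constant on the boundary of its own sublevel set. The only point that merits a line of care is the degenerate case $f=0$ (and, more generally, any $w$ for which $F_{f}^{w}$ is either empty or the whole space $V$), in which the boundary is empty and the implication holds vacuously; the argument above requires no modification, since in those cases there is simply no $z\in\operatorname*{bd}(F_{f}^{w})$ to test.
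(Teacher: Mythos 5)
Your proof is correct and follows essentially the same route as the paper's: both arguments rest on the continuity of $f$, the openness of the preimages of the open half-lines around $w$, and the fact that a boundary point lies in the closure but not the interior of $F_{f}^{w}$. The paper phrases this as a proof by contradiction (a point with $f(z)<w$ would be interior, one with $f(z)>w$ would be exterior), while you phrase it directly via the closedness of $F_{f}^{w}$ and the openness of the strict sublevel set; the content is the same.
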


\begin{proof}
Suppose $z\in\operatorname*{bd}(F_{f}^{w})$. Put $A=]-\infty,w[$ and
$B=]w,+\infty\lbrack$. As the sets $A$ and $B$ are $\mathbb{R}$-open and $f\in
V^{\ast}$, we have that the sets $f^{-1}[A]$ and $f^{-1}[B]$ are $V$-open.
Therefore, if $f(z)<w$ then there exists a $V$-neighborhood $N_{z}$ of $z$
such that $f(v)<w$ for all $v\in N_{z}$ while if $f(z)>$ $w$ then there exists
a $V$-neighborhood $N_{z}$ of $z$ such that $f(v)>w$ for all $v\in N_{z}$: a
contradiction with $z\in\operatorname*{bd}(F_{f}^{w})$. So $f(z)=w$.
\end{proof}

\begin{remark}
As it is clear from the proof of Lemma \ref{UGUA}, the linearity of $f$ can be
dispensed with in the statement of Lemma \ref{UGUA}. We assume it only for
expositional convenience.
\end{remark}

\begin{lemma}
\label{WW}Let $V$ be a RVS endowed with some topology and $X$ be a cone in $V
$. Suppose $(w,f)\in\mathbb{R\times}P_{X}$. Then $\operatorname*{bd}(F_{f}%
^{w}) $ is an $X$-antichain.
\end{lemma}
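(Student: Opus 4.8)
The plan is to show directly that any two distinct points of $\operatorname*{bd}(F_{f}^{w})$ are unrelated through the cone $X$, i.e. that their difference lies outside $X\cup-X$. The whole argument rests on two observations: first, that $f$ takes the constant value $w$ on the boundary $\operatorname*{bd}(F_{f}^{w})$, which is precisely the content of Lemma \ref{UGUA}; second, that a functional in $P_{X}$ is strictly positive on every nonzero element of $X$.

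First I would pick two arbitrary distinct points $z_{1},z_{2}\in\operatorname*{bd}(F_{f}^{w})$. By Lemma \ref{UGUA} we have $f(z_{1})=w=f(z_{2})$, and the linearity of $f$ then yields $f(z_{2}-z_{1})=0$. Since $z_{1}\neq z_{2}$, the vector $z_{2}-z_{1}$ is nonzero. Next I would argue by contradiction, supposing $z_{2}-z_{1}\in X\cup-X$. If $z_{2}-z_{1}\in X$, then $z_{2}-z_{1}\in X\backslash\{0\}$ and the membership $f\in P_{X}$ forces $f(z_{2}-z_{1})>0$, contradicting $f(z_{2}-z_{1})=0$. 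If instead $z_{2}-z_{1}\in-X$, then $z_{1}-z_{2}\in X\backslash\{0\}$, so $f(z_{1}-z_{2})>0$, i.e. $f(z_{2}-z_{1})<0$, again contradicting $f(z_{2}-z_{1})=0$. Hence $z_{2}-z_{1}\notin X\cup-X$, and since $z_{1},z_{2}$ were arbitrary distinct boundary points, $\operatorname*{bd}(F_{f}^{w})$ is an $X$-antichain.

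There is essentially no serious obstacle here: the statement reduces to the elementary fact that a functional positive on $X\backslash\{0\}$ cannot vanish on a nonzero element of $X\cup-X$. The only point requiring any care is confirming that $f$ is constant on the boundary, and this is supplied verbatim by Lemma \ref{UGUA}; the remainder is a two-case check using the linearity of $f$ and the definition of $P_{X}$.
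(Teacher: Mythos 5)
Your proof is correct and follows essentially the same route as the paper's: both invoke Lemma \ref{UGUA} to get $f(z_{1})=f(z_{2})=w$ and then use the strict positivity of $f\in P_{X}$ on $X\backslash\{0\}$ (and its negativity on $-X\backslash\{0\}$) to rule out $z_{2}-z_{1}\in X\cup-X$. No substantive difference.
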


\begin{proof}
Suppose $x$ and $y$ are distinct elements in $\operatorname*{bd}(F_{f}^{w})$.
By way of contradiction, suppose
\begin{equation}
y-x\in X\cup-X\text{.}\label{HA1}%
\end{equation}
Lemma \ref{UGUA} implies%
\begin{equation}
f(y)=f(x)\text{.}\label{HA2}%
\end{equation}
As $y$ and $x$ are distinct, we have that
\begin{equation}
y-x\neq0\text{.}\label{HA3}%
\end{equation}
The assumption $f\in P_{X}$ implies the positivity of $f$ at all nonzero
vectors in $X$ and the negativity of $f$ at all nonzero vectors in $-X$.
Consequently, $f(y-x)\neq0$ by virtue of (\ref{HA1}) and (\ref{HA3}). The
linearity of $f$ in turn implies $f(y)\neq f(x)$: a contradiction with
(\ref{HA2}).\ \ \
\end{proof}

\begin{theorem}
\label{SAB}Let $V$ be a RVS endowed with some topology and $X$ be a cone in
$V$. Suppose $(w,f)\in\mathbb{R\times}P_{X}$ and $R$ is a locally nonsatiated
total relation on $X$. Then $\mathcal{M}(R,B_{f,X}^{w})$ is an $X $-antichain
included in $\operatorname*{bd}(F_{f}^{w})$.
\end{theorem}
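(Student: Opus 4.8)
The plan is to prove the stated inclusion $\mathcal{M}(R,B_{f,X}^{w})\subseteq\operatorname*{bd}(F_{f}^{w})$ first, and then to read off the antichain property essentially for free. Indeed, the defining condition of an $X$-antichain only constrains pairs of distinct points, so any subset of an $X$-antichain is again an $X$-antichain; hence, once the inclusion is established, Lemma \ref{WW} (which says that $\operatorname*{bd}(F_{f}^{w})$ is an $X$-antichain) immediately forces $\mathcal{M}(R,B_{f,X}^{w})$ to be an $X$-antichain. Thus all the real work lies in the inclusion.

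To prove the inclusion I would fix $m\in\mathcal{M}(R,B_{f,X}^{w})$ and show that $m$ cannot be an interior point of $F_{f}^{w}$. Since $m\in B_{f,X}^{w}\subseteq F_{f}^{w}\subseteq\operatorname*{cl}(F_{f}^{w})$, the point $m$ belongs either to $\operatorname*{int}(F_{f}^{w})$ or to $\operatorname*{bd}(F_{f}^{w})$, so excluding the former yields the claim. Arguing by contradiction, suppose $m\in\operatorname*{int}(F_{f}^{w})$ and choose a $V$-neighborhood $U$ of $m$ with $U\subseteq F_{f}^{w}$. Local nonsatiation places $m$ in the closure of the set $\{y\in X:y\in R(m)\text{ and }m\notin R(y)\}$, so $U$ must meet this set: there is some $y\in U$ with $y\in X$, $y\in R(m)$ and $m\notin R(y)$.

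The contradiction comes from confronting this $y$ with maximality. Because $y\in U\subseteq F_{f}^{w}$ we have $f(y)\leq w$, and as $y\in X$ this gives $y\in B_{f,X}^{w}$. Since $R$ is total and $m\in\mathcal{M}(R,B_{f,X}^{w})$, Lemma \ref{BASE} yields $m\in R(s)$ for every $s\in B_{f,X}^{w}$, and in particular $m\in R(y)$---contradicting $m\notin R(y)$. Hence $m\notin\operatorname*{int}(F_{f}^{w})$, so $m\in\operatorname*{bd}(F_{f}^{w})$, and the inclusion follows.

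I expect the only subtle step to be the extraction of the witness $y$: one must shrink the neighborhood $U$ enough to remain inside $F_{f}^{w}$ and then use the local-nonsatiation closure condition to produce a strictly preferred $y\in X$ lying in $U$. The remaining ingredients---passing from $f(y)\leq w$ to $y\in B_{f,X}^{w}$, invoking Lemma \ref{BASE}, and deducing the antichain property from Lemma \ref{WW}---are routine. Note in particular that the positivity hypothesis $f\in P_{X}$ enters only through Lemma \ref{WW}, while the inclusion itself rests solely on local nonsatiation, totality, and maximality.
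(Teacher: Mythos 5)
Your proposal is correct and follows essentially the same route as the paper: reduce to the inclusion $\mathcal{M}(R,B_{f,X}^{w})\subseteq\operatorname*{bd}(F_{f}^{w})$ via Lemma \ref{WW}, then argue by contradiction that an interior point of $F_{f}^{w}$ would admit, by local nonsatiation, a strictly preferred point of the budget set. The only cosmetic difference is that the paper contradicts the definition of maximality directly rather than routing through Lemma \ref{BASE}.
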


\begin{proof}
By virtue of Lemma \ref{WW}, we are done if we show that $\mathcal{M}%
(R,B_{f,X}^{w})\subseteq\operatorname*{bd}(F_{f}^{w})$. By contradiction,
suppose
\begin{equation}
m\in\mathcal{M}(R,B_{f,X}^{w})\label{Reno0}%
\end{equation}
and
\begin{equation}
m\notin\operatorname*{bd}(F_{f}^{w}).\label{Reno1}%
\end{equation}
As $m\in B_{f,X}^{w}\subseteq F_{f}^{w}$, from (\ref{Reno1}) we infer that
$m\in\operatorname*{int}(F_{f}^{w})$. The previous membership implies the
existence of a $V$-neighborhood $N_{m}$ of $m$ such that $N_{m}\subseteq
F_{f}^{w}$ and the local nonsatiation of $R$ in turn implies the existence of
$x\in N_{m}\cap X$ such that
\begin{equation}
x\in R(m)\text{ and }m\notin R(x)\text{.}\label{Reno2}%
\end{equation}
As $x\in N_{m}\cap X\subseteq F_{f}^{w}\cap X$, noting that $X\cap F_{f}%
^{w}=B_{f,X}^{w}$ we conclude that $x\in B_{f,X}^{w}$: a contradiction with
(\ref{Reno0}) and (\ref{Reno2}).
\end{proof}

\begin{corollary}
\label{SSAABB}Let $V$ be a RVS endowed with some topology and $X$ be a cone in
$V$. Suppose $(w,f)\in\mathbb{R\times}P_{X}$ and $R$ is a locally nonsatiated
total relation on $X$. If $C\subseteq X$ is a cone in $V$ then $\mathcal{M}%
(R,B_{f,X}^{w})$ is a $C$-antichain included in $\operatorname*{bd}(F_{f}%
^{w})$.
\end{corollary}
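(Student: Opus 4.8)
The plan is to read off the result directly from Theorem \ref{SAB}, using only the monotonicity of the $C$-antichain property in the cone $C$. First I would observe that the hypotheses of the present statement are \emph{exactly} those of Theorem \ref{SAB}: $V$ is a RVS with a topology, $X$ is a cone in $V$, $(w,f)\in\mathbb{R}\times P_{X}$, and $R$ is a locally nonsatiated total relation on $X$. Applying Theorem \ref{SAB} therefore yields at once that $\mathcal{M}(R,B_{f,X}^{w})$ is an $X$-antichain included in $\operatorname*{bd}(F_{f}^{w})$. The inclusion in $\operatorname*{bd}(F_{f}^{w})$ is already the full second half of the desired conclusion, so nothing further is needed there.

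It then remains only to upgrade the $X$-antichain property to a $C$-antichain property, and this is where the hypothesis $C\subseteq X$ enters. The key elementary observation is that since $C\subseteq X$ we also have $-C\subseteq -X$, whence
\[
C\cup(-C)\subseteq X\cup(-X)\text{.}
\]
Now recall the definition of an $X$-antichain: for every pair of distinct points $x,y\in\mathcal{M}(R,B_{f,X}^{w})$ one has $y-x\notin X\cup(-X)$. By the displayed inclusion, the complement of $X\cup(-X)$ is contained in the complement of $C\cup(-C)$, so $y-x\notin X\cup(-X)$ forces $y-x\notin C\cup(-C)$. This is precisely the defining condition for $\mathcal{M}(R,B_{f,X}^{w})$ to be a $C$-antichain.

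I do not expect any genuine obstacle here: the corollary is a direct specialization of Theorem \ref{SAB} combined with the trivial fact that the $C$-antichain condition weakens as the cone $C$ shrinks. The only point worth stating carefully is the set-theoretic containment $C\cup(-C)\subseteq X\cup(-X)$ together with its consequence on complements, which is the single line that transfers the antichain property from $X$ down to $C$.
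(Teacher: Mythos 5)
Your proposal is correct and follows exactly the paper's own route: invoke Theorem \ref{SAB} to get that $\mathcal{M}(R,B_{f,X}^{w})$ is an $X$-antichain contained in $\operatorname{bd}(F_{f}^{w})$, then pass from the $X$-antichain property to the $C$-antichain property via $C\cup(-C)\subseteq X\cup(-X)$. The paper compresses this last step into the phrase ``a fortiori''; you have merely written out the same one-line containment argument explicitly.
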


\begin{proof}
Theorem \ref{SAB} ensures that $\mathcal{M}(R,B_{f,X}^{w})$ is an
$X$-antichain. A fortiori, $\mathcal{M}(R,B_{f,X}^{w})$ is a $C$-antichain.
\end{proof}

\subsection{\label{CMM}Convexity of sets of maximals and of sets of
maximizers}

Using the results obtained in Sect. \ref{ACMM} and \ref{AMM}, Theorems
\ref{PAZ} and \ref{PIZZ} derive sufficient conditions for the convexity of the
set of maximals of a relation and for that of maximizers of a function.

\begin{theorem}
\label{PAZ}Let $V$ be a RVS endowed with some topology and $X$ be a convex
cone in $V$. Suppose $(w,f)\in\mathbb{R\times}P_{X}$ and $R$ is a locally
nonsatiated total relation on $X$. If $C\subseteq X$ is a cone in $V$ and $R$
is $C$-antichain-convex then $\mathcal{M}(R,B_{f,X}^{w})$ is a convex
$C$-antichain included in $\operatorname*{bd}(F_{f}^{w})$.
\end{theorem}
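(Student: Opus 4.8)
The plan is to assemble Theorem \ref{PAZ} directly from the results already established, so that essentially no new argument is required—only the verification that the hypotheses of the relevant prior theorems are met. The conclusion asserts three properties of $\mathcal{M}(R,B_{f,X}^{w})$: that it is a $C$-antichain, that it is included in $\operatorname*{bd}(F_{f}^{w})$, and that it is convex. The first two are immediate from Corollary \ref{SSAABB}: under the present hypotheses (namely $(w,f)\in\mathbb{R}\times P_{X}$, $R$ locally nonsatiated and total on $X$, and $C\subseteq X$ a cone), that corollary already delivers exactly that $\mathcal{M}(R,B_{f,X}^{w})$ is a $C$-antichain included in $\operatorname*{bd}(F_{f}^{w})$. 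So the only genuinely new thing to prove is convexity.

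For convexity, my plan is to invoke Theorem \ref{OSSY}, which states that $\mathcal{M}(R,S)$ is $C$-antichain-convex whenever $X$ and $S$ are $C$-antichain-convex sets with $S\subseteq X$ and $R$ is a total, $C$-antichain-convex relation on $X$. The delicate point is that Theorem \ref{OSSY} requires both $X$ and the constraint set $S=B_{f,X}^{w}$ to be $C$-antichain-convex. The hypothesis that $X$ is a convex cone makes it convex, and a convex set is $C$-antichain-convex for any cone $C$ (this follows from the definition, since $C$-antichain-convexity only asks for the convexity condition on \emph{some} pairs). For the budget set $B_{f,X}^{w}=F_{f}^{w}\cap X$: this is the intersection of the half-space $F_{f}^{w}=\{v:f(v)\le w\}$, which is convex by linearity of $f$, with the convex set $X$, hence $B_{f,X}^{w}$ is itself convex and therefore $C$-antichain-convex. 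Thus both $C$-antichain-convexity requirements are satisfied, and since $R$ is assumed total and $C$-antichain-convex, Theorem \ref{OSSY} applies and yields that $\mathcal{M}(R,B_{f,X}^{w})$ is $C$-antichain-convex.

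The final step is to combine the two conclusions. We now know that $\mathcal{M}(R,B_{f,X}^{w})$ is simultaneously a $C$-antichain (from Corollary \ref{SSAABB}) and $C$-antichain-convex (from Theorem \ref{OSSY}). The key observation is that a set which is both a $C$-antichain and $C$-antichain-convex is automatically convex: for any two points $m^{\bullet},m^{\circ}$ in the set, either they are equal, or—being a $C$-antichain—they satisfy $m^{\bullet}-m^{\circ}\notin C\cup -C$, and in that case $C$-antichain-convexity forces every convex combination $\lambda m^{\bullet}+(1-\lambda)m^{\circ}$ back into the set. Hence the whole segment lies in the set and the set is convex. Putting the three facts together gives exactly the stated conclusion: $\mathcal{M}(R,B_{f,X}^{w})$ is a convex $C$-antichain included in $\operatorname*{bd}(F_{f}^{w})$.

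I do not anticipate any serious obstacle here, since the theorem is a synthesis of machinery developed earlier; the only point requiring a moment's care is confirming the $C$-antichain-convexity of the budget set $B_{f,X}^{w}$ (via its convexity) so that Theorem \ref{OSSY} is legitimately applicable, and articulating cleanly the elementary implication that ``$C$-antichain $+$ $C$-antichain-convex $\Rightarrow$ convex.''
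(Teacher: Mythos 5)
Your proof is correct and follows essentially the same route as the paper: establish convexity (hence $C$-antichain-convexity) of $B_{f,X}^{w}$, then combine Theorem \ref{OSSY} with Corollary \ref{SSAABB}. The only difference is that you spell out the final implication ``$C$-antichain $+$ $C$-antichain-convex $\Rightarrow$ convex,'' which the paper leaves implicit; your argument for it is valid.
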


\begin{proof}
The set $F_{f}^{w}$ is readily seen to be convex. So also $B_{f,X}^{w}$ is
convex (as it is the intersection of the convex sets $X$ and $F_{f}^{w}$). A
fortiori, $B_{f,X}^{w}$ is $C$-antichain-convex by Proposition 2 in
\cite{Cepa19}. Said this, Theorem \ref{PAZ} is a consequence of Theorem
\ref{OSSY} and Corollary \ref{SSAABB}.\ \
\end{proof}

\begin{theorem}
\label{PIZZ}Let $V$ be a RVS endowed with some topology and $X$ be a convex
cone in $V$. Suppose $(w,f)\in\mathbb{R\times}P_{X}$ and $u:X\rightarrow
\mathbb{R}$ is a locally nonsatiated function. If $C\subseteq X$ is a cone in
$V$ and $u$ is $C$-antichain-quasiconcave then $\arg\max_{x\in B_{f,X}^{w}%
}u(x)$ is a convex $C$-antichain included in $\operatorname*{bd}(F_{f}^{w})$.
\end{theorem}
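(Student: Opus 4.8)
The plan is to deduce Theorem \ref{PIZZ} from its relational counterpart, Theorem \ref{PAZ}, by replacing the function $u$ with the total preorder it canonically induces. Concretely, I would introduce the relation $R$ on $X$ defined by $R(x)=\{y\in X:u(y)\geq u(x)\}$ for all $x\in X$, i.e. the relation $(\ref{JENNY})$ of Remark \ref{JOY}. The whole argument then consists in checking that this $R$ meets every hypothesis of Theorem \ref{PAZ} and that its set of maximals coincides with the set of maximizers of $u$.

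First I would record, via Remark \ref{JOY}, that $R$ is a total preorder (in particular total) and that $\arg\max_{x\in B_{f,X}^w}u(x)=\mathcal{M}(R,B_{f,X}^w)$. Next, since $u$ is $C$-antichain-quasiconcave, Remark \ref{JON} yields the $C$-antichain-convexity of $R$; and since $u$ is locally nonsatiated, Remark \ref{JAN} yields the local nonsatiation of $R$. At this point all the data required by Theorem \ref{PAZ} are in place: $V$ is a RVS endowed with a topology, $X$ is a convex cone, $(w,f)\in\mathbb{R}\times P_X$, $C\subseteq X$ is a cone in $V$, and $R$ is a locally nonsatiated, total, $C$-antichain-convex relation on $X$.

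Applying Theorem \ref{PAZ} then gives that $\mathcal{M}(R,B_{f,X}^w)$ is a convex $C$-antichain included in $\operatorname{bd}(F_f^w)$, and the identification $\arg\max_{x\in B_{f,X}^w}u(x)=\mathcal{M}(R,B_{f,X}^w)$ from Remark \ref{JOY} transfers all three conclusions verbatim to the set of maximizers of $u$. I do not anticipate a genuine obstacle: the statement is a corollary-style specialization, and the only point requiring care is confirming that Remarks \ref{JOY}, \ref{JON} and \ref{JAN} collectively supply \emph{exactly} the totality, the $C$-antichain-convexity and the local nonsatiation demanded by Theorem \ref{PAZ}, so that nothing need be verified by hand. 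Should a self-contained route be preferred, the same conclusions could instead be reached by re-running the proof of Theorem \ref{PAZ} with $u$ in place of $R$, invoking Corollary \ref{OSSYCO} in lieu of Theorem \ref{OSSY} for the convexity step; but the reduction through Remark \ref{JOY} is the cleaner path.
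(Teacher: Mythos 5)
Your proposal is correct and is exactly the paper's argument: the paper derives Theorem \ref{PIZZ} from Theorem \ref{PAZ} together with Remarks \ref{JON} and \ref{JAN}, with Remark \ref{JOY} supplying the totality of $R$ and the identification $\arg\max_{x\in B_{f,X}^{w}}u(x)=\mathcal{M}(R,B_{f,X}^{w})$. Your write-up just spells out the verifications the paper leaves implicit.
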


\begin{proof}
A consequence of Theorem \ref{PAZ} and Remarks \ref{JON} and \ref{JAN}.
\end{proof}

\ \

Theorems \ref{PAZ} and \ref{PIZZ} are of importance to economics as they show
that, in many optimization problems which can be encountered therein, the
usual convexity assumptions are not necessary and can be relaxed. More
concretely---and keeping in mind footnote \ref{ATTENZ}---given a real-valued
locally nonsatiated (utility) function $u$ on $X=\mathbb{R}_{+}^{n}$, Theorem
\ref{PIZZ} guarantees that the convexity of the Walrasian demand
correspondence $x:\mathbb{R}_{++}^{n}\times\mathbb{R}_{+}\rightarrow2^{X}$
defined by
\[
x(p,w)=\underset{s\in\{y\in X:py\leq w\}}{\arg\max}u(s)\text{,}%
\]
(with $py$ representing the expenditure at $y$ given a price $p\in
\mathbb{R}_{++}^{n}$ and $w$ representing the consumer's wealth) obtains when
$u$ is $\mathbb{R}_{+}^{n}$-antichain-quasiconcave: the stronger assumption of
quasiconcavity is not necessary. So, for instance, Theorem \ref{PIZZ}
guarantees that the real-valued (locally nonsatiated) function $u$ on
$X=\mathbb{R}_{+}^{2}$ defined by%
\[
u(x_{1},x_{2})=\frac{x_{1}x_{2}}{x_{1}+1}-5x_{1}+x_{2}%
\]
(which is $\mathbb{R}_{+}^{2}$-antichain-quasiconcave, albeit not
quasiconcave, by Example 9 and Remark 8 in \cite{Cepa19}) generates a
convex-valued demand correspondence.

\subsection{\label{IUC}On invariance under convexification}

The previous theorems are not sufficient to guarantee the equivalence of the
set of maximals of a relation and that of its convexification. Theorem
\ref{GRINTA} shows that a strengthening of the conditions posited in Theorem
\ref{PAZ} allows to obtain the desired equivalence result. The most important
additional assumption that we impose is the existence of a maximal. In
finite-dimensional spaces, the compactness of the (budget) set $B_{f,X}^{w}$
obtains under reasonable economic assumptions and the existence of maximals is
not a real issue; in infinite-dimensional spaces, however, the compactness of
$B_{f,X}^{w}$ does not generally hold and maximals need not exist. The
difficult issue of the existence of maximals in infinite-dimensional spaces
has been investigated and has received some answers: see \cite{Poly08} and the
literature cited therein.

\begin{theorem}
\label{GRINTA}Let $V$ be a RVS endowed with some topology and $X$ be a convex
cone in $V$ containing the zero vector. Suppose $(w,f)\in\mathbb{R\times}%
P_{X}$ and $R$ is a locally nonsatiated total preorder relation on $X$. If
$C\subseteq X$ is a convex cone in $V$ and $R$ is $C$-antichain-convex then
\[
\mathcal{M}(R,B_{f,X}^{w})\neq\emptyset\Rightarrow\mathcal{M}(R,B_{f,X}%
^{w})=\mathcal{M}(R^{\operatorname*{co}},B_{f,X}^{w})\text{.}%
\]

\end{theorem}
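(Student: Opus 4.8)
The plan is to establish the two inclusions $\mathcal{M}(R,B_{f,X}^{w})\subseteq\mathcal{M}(R^{\operatorname*{co}},B_{f,X}^{w})$ and $\mathcal{M}(R^{\operatorname*{co}},B_{f,X}^{w})\subseteq\mathcal{M}(R,B_{f,X}^{w})$ separately, the first being immediate and the second being the real content. Writing $B=B_{f,X}^{w}$ throughout, note that $R(x)\subseteq\operatorname*{co}(R(x))=R^{\operatorname*{co}}(x)$ for every $x$, so $R\subseteq R^{\operatorname*{co}}$; since $R$ is total, $R^{\operatorname*{co}}$ is total as well. The forward inclusion then follows at once from Lemma \ref{PEL}. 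For the reverse inclusion the hypothesis $\mathcal{M}(R,B)\neq\emptyset$ becomes indispensable, so I would fix once and for all an element $y\in\mathcal{M}(R,B)$.

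Next I would take an arbitrary $m\in\mathcal{M}(R^{\operatorname*{co}},B)$ and reduce the whole problem to the single membership $m\in R(y)$: once that is known, part 1 of Lemma \ref{AIUT}, applied to the total preorder $R$, the subset $B$, the element $m\in B$ and the maximal $y$, gives $m\in\mathcal{M}(R,B)$. To obtain $m\in R(y)$, I would use that $R^{\operatorname*{co}}$ is total together with Lemma \ref{BASE} to get $m\in R^{\operatorname*{co}}(s)$ for all $s\in B$, and specialize to $s=y\in B$, which yields $m\in R^{\operatorname*{co}}(y)=\operatorname*{co}(R(y))$. Then I would set $C_{0}=C\cup\{0\}$, a convex cone containing the zero vector by part 1 of Lemma \ref{ERRA}; since $R$ is $C$-antichain-convex and $C\subseteq C_{0}$, the set $R(y)$ is $C_{0}$-antichain-convex. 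Part 2 of Lemma \ref{HM}, applied to $R(y)$ and the cone $C_{0}$, then produces an $x\in R(y)$ that is $C_{0}$-Pareto dominated by $m$, i.e. $m=x+c$ for some $c\in C_{0}$.

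The heart of the argument, and the step I expect to be the main obstacle, is to show $c=0$. The difficulty is that nothing forces $R$ to be monotone with respect to $C$, so one cannot compare $m$ and $x$ directly through the relation; instead the positivity of $f$ and the budget structure must do the work. Suppose $c\neq0$. Then $c\in C\setminus\{0\}\subseteq X\setminus\{0\}$, and $f\in P_{X}$ gives $f(c)>0$, whence $f(x)=f(m)-f(c)<f(m)\leq w$, so $f(x)\leq w$; as $x\in R(y)\subseteq X$, this places $x$ in $B$. Part 1 of Lemma \ref{AIUT} applied to $x\in R(y)$ with $x\in B$ and $y\in\mathcal{M}(R,B)$ then yields $x\in\mathcal{M}(R,B)$, so Theorem \ref{SAB} forces $x\in\operatorname*{bd}(F_{f}^{w})$ and Lemma \ref{UGUA} forces $f(x)=w$, contradicting $f(x)<w$. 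Hence $c=0$, i.e. $m=x\in R(y)$, which completes the reduction and the proof.

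Beyond the main contradiction, the only points needing care are the totality of $R^{\operatorname*{co}}$ and the passage from $C$-antichain-convexity of $R(y)$ to $C_{0}$-antichain-convexity via the inclusion $C\subseteq C_{0}$ and the monotonicity of antichain-convexity in the cone; I would flag these explicitly. The degenerate possibility $C=\emptyset$ is harmless, since then $C_{0}=\{0\}$ and part 2 of Lemma \ref{HM} returns $c=0$ immediately.
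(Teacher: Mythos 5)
Your proof is correct and follows essentially the same route as the paper's: the forward inclusion via Lemma \ref{PEL}, and the reverse inclusion by fixing a maximal $y$, applying part 2 of Lemma \ref{HM} to $m\in\operatorname*{co}(R(y))$, and using the positivity of $f$ together with Theorem \ref{SAB} and Lemma \ref{UGUA} to force the dominating cone element to vanish. Your organization is in fact slightly more direct: by proving $c=0$ outright you dispense with the paper's outer contradiction hypothesis, the auxiliary element $y^{\ast}$, and the appeal to part 2 of Lemma \ref{AIUT}.
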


\begin{proof}
Recall that $C\cup\{0\}$ is a convex cone in $V$ by virtue of part 1 of Lemma
\ref{ERRA}. Part 2 of Lemma 3 in \cite{Cepa19} implies that $R(v)$ is
$C\cup\{0\}$-antichain-convex for all $v\in X$. Recalled these facts and that
$0\in X$ by assumption, henceforth suppose without loss of generality that
$0\in C$. The previous membership and the convexity of $C$ imply
$C=\operatorname*{co}(C\cup\{0\})$. As $R\subseteq R^{\operatorname*{co}}$, we
have that $R^{\operatorname*{co}}$ is total. Noted this, we can apply Lemma
\ref{PEL} concluding that%
\begin{equation}
\mathcal{M}(R,B_{f,X}^{w})\subseteq\mathcal{M}(R^{\operatorname*{co}}%
,B_{f,X}^{w})\text{.}\label{GINO}%
\end{equation}
Suppose $\mathcal{M}(R,B_{f,X}^{w})\neq\emptyset$ and pick an arbitrary%
\begin{equation}
y\in\mathcal{M}(R,B_{f,X}^{w})\text{.}\label{FCA}%
\end{equation}
We conclude the proof showing that the converse of the inclusion in
(\ref{GINO}) is true. So, suppose that%
\begin{equation}
m\in\mathcal{M}(R^{\operatorname*{co}},B_{f,X}^{w})\label{Gin1}%
\end{equation}
and, by way of contradiction, that $m\notin\mathcal{M}(R,B_{f,X}^{w})$: then
there exists $y^{\ast}\in B_{f,X}^{w}$ such that $y^{\ast}\in R(m)$ and
\begin{equation}
m\notin R(y^{\ast})\text{.}\label{REST}%
\end{equation}
By part 2 of Lemma \ref{AIUT}, from (\ref{FCA}) and (\ref{REST}) we infer that
$y\in R(m)$ and
\begin{equation}
m\notin R(y)\text{.}\label{Gin2}%
\end{equation}
By Lemma \ref{BASE}, the totality of $R^{\operatorname*{co}}$ and the
membership in (\ref{Gin1}) imply $m\in R^{\operatorname*{co}}(y)$: part 2 of
Lemma \ref{HM} in turn implies the existence of an element%
\begin{equation}
x\in R(y)\label{Gin4}%
\end{equation}
such that $x\in m-C$. From (\ref{Gin2}) and (\ref{Gin4}) we conclude that
$x\neq m$. As $x\in m-C$ and $x\neq m$, there exists $c^{\ast}$ such that%
\begin{equation}
c^{\ast}\in C\backslash\{0\}\text{ and }x=m-c^{\ast}\text{.}\label{DOG}%
\end{equation}
By part 1 of Lemma \ref{AIUT}, from (\ref{FCA}) and (\ref{Gin4}) we infer that
$x\in\mathcal{M}(R,B_{f,X}^{w})$: Corollary \ref{SSAABB} and Lemma \ref{UGUA}
then imply%
\begin{equation}
f(x)=w\text{.}\label{PNZ}%
\end{equation}
The membership in (\ref{Gin1}) implies $m\in B_{f,X}^{w}$ and hence%
\begin{equation}
f(m)\leq w\text{.}\label{PNZ1}%
\end{equation}
As $f\in P_{X}$ and $C\subseteq X$, from (\ref{DOG}) we infer that $f(c^{\ast
})>0$ and $f(x)=f(m)-f(c^{\ast})$: then (\ref{PNZ1}) implies $f(x)<w$ in
contradiction with (\ref{PNZ}).
\end{proof}

\newpage

\bibliographystyle{abbrv}
\bibliography{biblio_nota}

\begin{thebibliography}{10}

\bibitem{Alip06}
C.~Aliprantis and K.~Border.
\newblock {\em Infinite Dimensional Analysis}.
\newblock Springer-Verlag, Berlin Heidelberg, third edition, 2006.

\bibitem{BotC08}
R.~Bo\c{t}, E.~Csetnek, and G.~Wanka.
\newblock Regularity conditions via quasi-relative interior in convex
  programming.
\newblock {\em SIAM J Optim}, 19:217--233, 2008.

\bibitem{Boni02}
J.-M. Bonnisseau.
\newblock The marginal pricing rule in economies with infinitely many
  commodities.
\newblock {\em Positivity}, 6:275--296, 2002.

\bibitem{Boni88}
J.-M. Bonnisseau and B.~Cornet.
\newblock Valuation equilibrium and {P}areto optimum in non-convex economies.
\newblock {\em J Math Econ}, 17:293--308, 1988.

\bibitem{Bord85}
K.~C. Border.
\newblock {\em Fixed Point Theorems with Applications to Economics and Game
  Theory}.
\newblock Cambridge University Press, 1985.

\bibitem{Borw92}
J.~M. Borwein and A.~S. Lewis.
\newblock Partially finite convex programming, part i: Quasi relative interiors
  and duality theory.
\newblock {\em Math Program}, 57:15--48, 1992.

\bibitem{Camm05}
F.~Cammaroto and B.~D. Bella.
\newblock Separation theorem based on the quasirelative interior and
  application to duality theory.
\newblock {\em J Optim Theory Appl}, 125:223--229, 2005.

\bibitem{Cepa17}
M.~C. Ceparano and F.~Quartieri.
\newblock Nash equilibrium uniqueness in nice games with isotone best replies.
\newblock {\em J Math Econ}, 70:154--165, 2017.

\bibitem{Cepa19}
M.~C. Ceparano and F.~Quartieri.
\newblock A second welfare theorem in a non-convex economy: The case of
  antichain-convexity.
\newblock {\em J Math Econ}, 81:31--47, 2019.

\bibitem{Dani07}
P.~Daniele, S.~Giuffrè, G.~Idone, and A.~Maugeri.
\newblock Infinite dimensional duality and applications.
\newblock {\em Math Ann}, pages 221--239, 2007.

\bibitem{Ekel99}
I.~Ekeland and R.~T\'{e}mam.
\newblock {\em Convex Analysis and Variational Problems}.
\newblock SIAM, 1999.

\bibitem{Flam06}
S.~Fl{\aa}m and A.~Jourani.
\newblock Prices and pareto optima.
\newblock {\em Optim}, 55:611--625, 2006.

\bibitem{Flor06}
M.~Florenzano, P.~Gourdel, and A.~Jofr{\'e}.
\newblock Supporting weakly {P}areto optimal allocations in infinite
  dimensional nonconvex economies.
\newblock {\em Econ Theory}, 29:549--564, 2006.

\bibitem{Flor13}
F.~Flores-Baz\'{a}n and G.~Mastroeni.
\newblock Strong duality in cone constrained nonconvex optimization.
\newblock {\em SIAM J Optim}, 23:153--169, 2013.

\bibitem{Gues75}
R.~Guesnerie.
\newblock {Pareto optimality in non-convex economies}.
\newblock {\em Econometrica}, 43:1--29, 1975.

\bibitem{Habt11}
A.~Habte and B.~S. Mordukhovich.
\newblock Extended second welfare theorem for nonconvex economies with infinite
  commodities and public goods.
\newblock In S.~Kusuoka and T.~Maruyama, editors, {\em Advances in Mathematical
  Economics}, pages 93--126, Tokyo, 2011.

\bibitem{Jofr15}
A.~Jofr\'{e} and A.~Jourani.
\newblock Characterizations of the free disposal condition for nonconvex
  economies on infinite dimensional commodity spaces.
\newblock {\em SIAM J. Optim}, 25:699--712, 2015.

\bibitem{Jofr06}
A.~Jofr{\'e} and J.~Rivera.
\newblock A nonconvex separation property and some applications.
\newblock {\em Math Program}, 108:37--51, 2006.

\bibitem{Kell63}
J.~L. Kelley and I.~Namioka.
\newblock {\em Linear Topological Spaces}.
\newblock Graduate Texts in Mathematics. Springer-Verlag, New York, 1963.

\bibitem{Khan99}
M.~A. Khan.
\newblock The {M}ordukhovich normal cone and the foundations of welfare
  economics.
\newblock {\em J Pub Econ Theory}, 1:309--338, 1999.

\bibitem{Khan87}
M.~A. Khan and R.~Vohra.
\newblock An extension of the second welfare theorem to economies with
  nonconvexities and public goods.
\newblock {\em Q J Econ}, 102:223--241, 1987.

\bibitem{Khan88}
M.~A. Khan and R.~Vohra.
\newblock Pareto optimal allocations of nonconvex economies in locally convex
  spaces.
\newblock {\em Nonlinear Anal Theory Methods Appl}, 12:943--950, 1988.

\bibitem{Masc95}
A.~Mas-Colell, M.~D. Whinston, and J.~R. Green.
\newblock {\em Microeconomic Theory}.
\newblock Oxford University Press, New York, 1995.

\bibitem{Mord00}
B.~S. Mordukhovich.
\newblock An abstract extremal principle with applications to welfare
  economics.
\newblock {\em J Math Anal Appl}, 251:187--216, 2000.

\bibitem{Mord18}
B.~S. Mordukhovich.
\newblock {\em Variational Analysis and Applications}.
\newblock Springer Monographs in Mathematics. Springer International
  Publishing, 2018.

\bibitem{Poly08}
I.~A. Polyrakis.
\newblock Demand functions and reflexivity.
\newblock {\em J Math Anal Appl}, 338:695--704, 2008.

\bibitem{Star69}
R.~M. Starr.
\newblock Quasi-equilibria in markets with non-convex preferences.
\newblock {\em Econometrica}, 37:25--38, 1969.

\bibitem{Vanc19}
D.~Van~Cuong, B.~S. Mordukhovich, and N.~M. Nam.
\newblock Quasi-relative interiors for graphs of convex set-valued mappings.
\newblock {\em Optim Lett}, 2019.

\bibitem{Zali02}
C.~Z\u{a}linescu.
\newblock {\em Convex Analysis in General Vector Spaces}.
\newblock World Scientific, 2002.

\bibitem{Zali15}
C.~Z\u{a}linescu.
\newblock On the use of the quasi-relative interior in optimization.
\newblock {\em Optim}, 64:1795--1823, 2015.

\end{thebibliography}

\end{document}